\documentclass[letterpaper, 10 pt, conference]{ieeeconf}

\IEEEoverridecommandlockouts
\overrideIEEEmargins
\pdfminorversion=4

\usepackage{cite}
\usepackage{amsmath,amssymb,amsfonts}
\usepackage{algorithmic}
\usepackage{graphicx}
\usepackage{textcomp}
\usepackage{bbm, xspace}
\usepackage{mathrsfs}
\usepackage{mathtools}
\usepackage{color}
\usepackage{epsfig}
\usepackage[acronym]{glossaries}
\usepackage{cancel}
\usepackage{empheq}

\usepackage{enumitem}
\usepackage{textgreek}
\usepackage{stackengine}
\usepackage{dblfloatfix}
\usepackage[font=small]{caption}
\usepackage{empheq}
\usepackage{flushend}
\usepackage{subcaption}

\newcommand{\ie}{i.e.,\@\xspace} 
\newcommand{\eg}{e.g.,\@\xspace} 

\def\be{\begin{equation}}
	\def\ee{\end{equation}}

\newcommand{\bs}{\boldsymbol}
\newcommand{\mc}{\mathcal}

\definecolor{darkgreen}{rgb}{0.0, 0.5, 0.0}

\renewcommand{\emph}{\textit}

\makeglossaries
\newacronym{GNEP}{GNEP}{generalized Nash equilibrium problem}
\newacronym{NE}{NE}{Nash equilibrium}
\newacronym{NEP}{NEP}{Nash equilibrium problem}
\newacronym{GNE}{GNE}{generalized Nash equilibrium}
\newacronym{v-GNE}{v-GNE}{variational \gls{GNE}}
\newacronym{ISS}{ISS}{Input-to-state-stable}
\newacronym{PPA}{PPA}{proximal-point algorithm}
\newacronym{PPPA}{PPPA}{preconditioned \gls{PPA}}
\newacronym{VI}{VI}{variational inequality}
\newacronym{GAE}{GAE}{generalized aggregative equilibrium}
\newacronym{v-GAE}{v-GAE}{variational \gls{GAE}}
\newacronym{KKT}{KKT}{Karush--Kuhn--Tucker}
\newacronym{FQNE}{FQNE}{firmly quasinonexpansive}
\newacronym{FNE}{FNE}{firmly nonexpansive}
\newacronym{ADMM}{ADMM}{alternating direction method of multipliers}
\newacronym{MPMM}{MPMM}{modified proximal method of multipliers}
\newacronym{OPF}{OPF}{optimal power flow}
\newacronym{OPFP}{OPFP}{optimal power flow problem}
\newacronym{NUM}{NUM}{network utility maximization}
\newacronym{END}{END}{Estimation Network Design}
\newacronym{DGD}{DGD}{distributed gradient descent}
\newacronym{ST}{ST}{Steiner Tree}
\newacronym{FB}{FB}{forward-backward}


\newcommand{\0}{\bs 0}
\def\1{{\bs 1}}


\def\argmin{\mathop{\rm argmin}}

\newcommand{\col}{\mathrm{col}}
\newcommand{\avg}{\operatorname{avg}}

\def\diag{\mathop{\hbox{\rm diag}}}

\def\Null{\mathop{\hbox{\rm null}}}

\def\range{\mathop{\hbox{\rm range}}}

\def\grad#1{{\nabla_{\! #1}}}
\def\subd#1{{\partial_{\!#1}}}

\def\spose#1{\hbox to 0pt{#1\hss}}



\def\R{\mathbb{R}}
\def\nat{\mathbb{N}}



\DeclareSymbolFont{myletters}{OML}{ztmcm}{m}{it}
\DeclareMathSymbol{\uplambda}{\mathord}{myletters}{"15}

\def\QEDhereeqn{\eqno\let\eqno\relax\let\leqno\relax\let\veqno\relax\hbox{\QED}}
\def\QEDopenhereeqn{\eqno\let\eqno\relax\let\leqno\relax\let\veqno\relax\hbox{\QEDopen}}



\makeatletter \let\cl@part\relax \makeatother
\usepackage{nohyperref}
\usepackage[capitalize]{cleveref}

\def\k{{k \in \nat}}  

\crefname{thm}{Theorem}{Theorems}
\crefname{lem}{Lemma}{Lemmas}
\crefname{cor}{Corollary}{Corollaries}
\crefname{claim}{Claim}{Claims}
\crefname{axiom}{Axiom}{Axioms}
\crefname{conj}{Conjecture}{Conjectures}
\crefname{fact}{Fact}{Facts}
\crefname{hypo}{Hypothesis}{Hypotheses}
\crefname{assum}{Assumption}{Assumptions}
\crefname{prop}{Proposition}{Propositions}
\crefname{crit}{Criterion}{Criteria}
\crefname{standing}{Standing Assumption}{Standing Assumptions}
\crefname{defn}{Definition}{Definitions}
\crefname{exmp}{Example}{Examples}
\crefname{rem}{Remark}{Remarks}
\crefname{prob}{Problem}{Problems}
\crefname{prin}{Principle}{Principles}
\crefname{alg}{Algorithm}{Algorithms}
\crefname{figure}{Figure}{Figures}
\crefname{assumption}{Assumption}{Assumptions}

\crefname{thmlisti}{Theorem}{Theorems}
\crefname{lemlisti}{Lemma}{Lemma}
\crefname{asmlisti}{Assumption}{Assumption}


\newtheorem{theorem}{Theorem}
\newtheorem{corollary}{Corollary}
\newtheorem{lemma}{Lemma}

\newtheorem{assumption}{Assumption}
\newtheorem{proposition}{Proposition}

\newtheorem{standing}{Standing Assumption}

\newtheorem{example}{Example}


\newlist{thmlist}{enumerate}{1}
\setlist[thmlist]{label={\it(\roman{thmlisti})}, ref=\thepb{(\it \roman{thmlisti})},noitemsep,topsep=0em,leftmargin=*}

\newlist{thmlist2}{enumerate}{1}
\setlist[thmlist2]{label={(\alph{thmlist2i})}, ref={{(\alph{thmlist2i})}},topsep=0em}

\newlist{asmlist}{enumerate}{1}
\setlist[asmlist]{label={\it(\roman{asmlisti})}, ref=\theassumption{(\it \roman{asmlisti})},noitemsep,topsep=0em,leftmargin=*}

\usepackage{thmtools}

\Crefname{pb}{Problem}{Problems}
\addtotheorempostheadhook[pb]{\crefalias{thmlisti}{pb}}

\Crefname{asm}{Assumption}{Assumptions}
\addtotheorempostheadhook[assumption]{\crefalias{thmlisti}{assumption}}



\def\I{\mc{I}}
\def\i{i\in\I}

\def\P{\mc{P}}
\def\p{{p\in\mc{P}}}

\def\y{y}
\def\hy{\h{y}}
\def\hyt{\tilde{\h{y}}}
\def\hyu{\underline{\h{y}}}

\def\hyavg{\h{y}_{\avg}}

\def\hz{\h{z}}

\def\hzu{\underline{\h{z}}}
\def\hzbar{\bar{\h{z}}}
\def\hw{\h{w}}

\def\hg{\h{g}}

\def\hystar{\h{y}^{\star}}
\def\hzstar{\h{z}^{\star}}
\def\mcV {\mc{V}}
\def\hv{\h{v}}

\def\eigmin{\uplambda_{\operatorname{min}}}


\def \Rmc{{   \textrm{R}   }}

\def \fbs{{\bs{f}}}

\def\n#1{{n_{#1}}}

\def\Ebs#1{{\h{\mc{C}}}_{#1}}
\def\Ebsperp#1{{\h{\mc{C}}}{}_{\!\scriptscriptstyle \perp}^{  \mat{#1}}}

\def\Piparallel#1{\Pi_{\scriptscriptstyle \parallel}^{\mat{#1}}}
\def\Piperp#1{\Pi_{\!\scriptscriptstyle \perp}^{\mat{#1}}}


\def\mat{\mathrm}
\def\set{\mc}


\def\h{\bs}

\def\id{\mat{I}}
\def\Ntilde#1{ {N}_{#1}}



\def\E#1{\mc{E}^{\scriptstyle \text{#1}}}
\def\g#1{\mc{G}^{\scriptstyle \text{#1}}}
\def\V#1{\mc{V}^{\scriptstyle \text{#1}}}
\def\W#1{\mat W^{\scriptstyle \text{#1}}}

\def\w#1{w^{\scriptstyle \text{#1}}}

\def\neig#1#2{\mc{N}{~\!\!}^{\scriptstyle \text{#1}}({#2})}
\def\neigo#1#2{\overline{\mc{N}}{~\!\!}^{\scriptstyle \text{#1}}({#2})}

\def\gD#1{{\mc{G}}^{\scriptstyle \text{D}}_{#1}}

\def\WD#1{{\mat W}^{\scriptstyle \text{D}}_{#1}}

\def\ED#1{\mc{E}^{\scriptstyle \text{D}}_{#1}}
\def\neigD#1#2{{\mc{N}}{~\!\!}^{\scriptstyle \text{D}}_{#1}(#2)}

\def\gk#1#2{{\mc{G}}^{{\scriptstyle \text{#1}},#2}}
\def\Ek#1#2{\mc{E}^{{\scriptstyle \text{#1}},{#2}}}
\def\gknocomma#1#2{{\mc{G}}^{{\scriptstyle \text{#1}}#2}}
\def\Eknocomma#1#2{\mc{E}^{{\scriptstyle \text{#1}}{#2}}}
\def\gDk#1#2{{\mc{G}}^{{\scriptstyle \text{D}},#1}_{#2}}
\def\WDk#1#2{{\mat W}^{{\scriptstyle \text{D}},#1}_{#2}}
\def\EDk#1#2{\mc{E}^{{\scriptstyle \text{D}},{#1}}_{#2}}

\def\WDbs{ {\h{\mat W}}^{\scriptstyle \text{D}}}



\title{
	Estimation Network Design framework for efficient distributed optimization
}

\author{Mattia Bianchi and  Sergio Grammatico
	\thanks{M. Bianchi is with the Automatic Control Laboratory (IfA), ETH Zürich, Switzerland (\texttt{mbianch@ethz.ch}). S. Grammatico is with the Delft Center for Systems and Control (DCSC), TU Delft, The Netherlands (\texttt{s.grammatico@tudelft.nl}). This work is supported by NWO under research project OMEGA (613.001.702), by the ERC under research project COSMOS (802348) and by ETH Zürich funds.}
}

\begin{document}

\maketitle
\thispagestyle{empty}
\pagestyle{empty}

\begin{abstract}
    Distributed decision problems features a group of agents that can only communicate over a peer-to-peer network, without a central memory. In applications such as network control and data ranking, each agent is only affected by a small portion of the decision vector: this sparsity is typically ignored in distributed algorithms, while it could be leveraged to improve efficiency and scalability. To address this issue, our recent paper \cite{Bianchi_minG_TCNS_2023} introduces \gls{END}, a graph theoretical language for the analysis and design of distributed iterations.
    \gls{END} algorithms can be tuned  to exploit the sparsity of specific problem instances, reducing communication overhead and minimizing redundancy, yet without requiring case-by-case convergence analysis. In this paper, we showcase the flexility of \gls{END} in the context of distributed optimization. In particular, we study the sparsity-aware version of many established methods, including ADMM, AugDGM and Push-Sum DGD. Simulations on an estimation problem in sensor networks demonstrate that \gls{END} algorithms can boost convergence speed and greatly reduce  the communication and memory  cost.
\end{abstract}

%

\section{Introduction}\label{sec:introduction}
 Modern big data optimization problems in network control and  machine learning 
 are typically \emph{partially separable} \cite{NecoaraClipici_Coordinate_SIAM2016} -- i.e., the cost function is the sum of $N$ individual costs, each depending only on a small portion of the overall optimization variable. This structure is widely exploited in \emph{parallel} algorithms \cite{NecoaraClipici_Coordinate_SIAM2016,Richtarik_HMLR_2016} -- where multiple distinct processors share the computation cost, but having access to a common memory. Yet, this is not the case for \emph{distributed} scenarios -- where the processors (or agents) are constrained to communicate uniquely with some neighbors over a communication network. In fact, most distributed optimization methods entails the agents reaching consensus on the entire optimization vector \cite{Nedic_DIGing_SIAM2017,Scutari_Unified_TSP2021}, even when each agent eventually only uses a few components of the solution, as in resource allocation and network control \cite{Notarnicola_Partitioned_TCNS2018}. 
 This may result in prohibitive memory and communication requirements, and in poor scalability  if the decision vector grows with the network size. 

Efficient solutions are known for \emph{partitioned} problems, where the local cost functions (or constraints) only directly couple each agent to its  neighbors \cite{Notarnicola_Partitioned_TCNS2018,Erseghe_ADMM_SPL2012,Todescato_Partition_AUT2020,Giselsson_AUT2013,Dallanese_Microgrids_TSG2013}.
Notably, however, this requires that
the communication graph matches the \emph{interference} graph (describing the coupling among the agents in the cost or constraints),
which is usually not the case for wireless and ad-hoc networks. 

Remarkably, general partially-separable problems were addressed via distributed dual methods, by Mota et al. \cite{Mota:LocalDomains:TAC:2014} and
later by Alghunaim et al.  \cite{Alghunaim_SparseConstraints_TAC2020,Alghunaim_Stochastic_TAC2020}: 
in this approach, each component of the optimization variable  is estimated by a suitably chosen cluster of agents only. 
Unfortunately, the dual formulation is only effective over undirected communication networks.
Other works \cite{Rebeschini_Locality_TCNS2019,Brown_Locality_TCNS2021} rely on the concept of \emph{locality}, which result in improved efficiency, but at the cost of accuracy; further, any structure beyond distance on the communication graph is ignored. 

To deal with these challenges, in our recent work \cite{Bianchi_minG_TCNS_2023}, we  introduced \glsfirst{END}, a
graph-theoretic language to describe   how the estimates of any variable of interest (e.g., optimization vector, dual multipliers, cost gradient)  are allocated and combined among the agents in a generic distributed algorithm. \gls{END} allows assigning the estimate of each component of the variable of interest to a subset of the agents, 
according to the sparsity structure of a given problem --  without resorting to a case-by-case convergence analysis. Leveraging the problem sparsity is especially convenient in repeated or time-varying problems (e.g., distributed estimation and model predictive control (MPC) \cite{Mota:LocalDomains:TAC:2014}), where the one time-cost of efficiently assigning the estimates yields improved (iterative) online performance. 
Although \cite{Bianchi_minG_TCNS_2023}   focuses on distributed Nash equilibrium problems, the \gls{END} framework is flexible and applicable to virtually any distributed decision problem.

\emph{Contributions:} In this paper, we apply and tailor the END framework \cite{Bianchi_minG_TCNS_2023} to distributed optimization problems, thus  unifying and generalizing several recent approaches.  
For the case of dual algorithms, our setup retrieves the formulation in \cite{Mota:LocalDomains:TAC:2014,Alghunaim_Stochastic_TAC2020} (see Proposition~\ref{prop:dual_reformuation}). Here we present a novel sparsity-aware ADMM, but one can obtain the END version of virtually any dual algorithm (Section~\ref{sec:dualmethods}).  Further, compared to \cite{Mota:LocalDomains:TAC:2014,Alghunaim_Stochastic_TAC2020}, our framework has broader applicability:
\begin{itemize}
[leftmargin =1.5em]
    \item[(i)] \emph{it can be used for primal methods.} To illustrate, we present the END version of the ABC method \cite{Scutari_Unified_TSP2021},  encompassing many established algorithms.  As an example, we derive a gradient-tracking iteration where each agent only needs to estimate a fraction of the cost gradient -- the \gls{END} counterpart of AugDGM \cite{Xu_AugDGM_CDC2015} (Section~\ref{sec:ABC});
    \item[(ii)] \emph{it works on directed graphs.} Specifically, we present the sparsity-aware version of the Push-Sum DGD algorithm \cite{NedicOlshevsky_Directed_TAC2015}, that is guaranteed to converge over time-varying and column stochastic graphs (Section~\ref{sec:DGD}).
\end{itemize}
It will be clear from our arguments that,  thanks to our powerful stacked notation, the analysis of \gls{END} algorithms presents  little complication compared to their sparsity-unaware counterparts.  Nonetheless, the impact in terms of flexibility and performance is remarkable. We illustrate numerically this point on an estimation problem for wireless sensor network, where we observe that \gls{END} can decrease the communication cost by more than $90\%$ (Section~\ref{sec:numerics}).

\subsection{Background and notation}

\subsubsection{Basic notation}  $\mathbb{N}$ is the set of natural numbers, including $0$.
$\R$ ($\R_{\geq 0}$) is the set of (nonnegative) real numbers.
$\0_q\in \R^q$ ($\1_q\in\R^q$) is a vector with all elements equal to $0$ ($1$); $\mathrm{I}_q\in\R^{q\times q}$ is an identity matrix; the subscripts may be omitted when there is no ambiguity. $e_i$ denotes a vector of appropriate dimension with $i$-th element equal to 1 and all other elements equal to 0.
For  a matrix $ \mat A  \in \R^{p \times q}$, $[\mat A]_{i,j}$ is the element on  row $i$ and column $j$; $\Null(\mat A)\coloneqq \{x\in\R^q \mid \mat Ax=\0_n\}$ and $\range(\mat A)\coloneqq \{v\in\R^p \mid v=\mat Ax, x\in\R^q \}$.
If $ \mat A= \mat A^\top\in\R^{q\times q}$, $\uplambda_{\textnormal{min}}(\mat A)=:\uplambda_1(\mat A)\leq\dots\leq\uplambda_q(\mat A)=:\uplambda_{\textnormal{max}}(\mat A)$ denote its eigenvalues.
$\diag(\mat A_1,\dots,\mat A_N)$ is the block diagonal matrix with $\mat A_1,\dots,\mat A_N$ on its diagonal. Given $N$ vectors $ x_1, \ldots, x_N$,  $\col (x_1,\ldots,x_N ) \coloneqq  [ x_1^\top \ldots  x_N^\top ]^\top$. $\otimes$ denotes the Kronecker product.  Given a positive definite matrix $ \R^{q\times q} \ni \mat Q \succ 0$,  $\langle x \mid y \rangle _\mat{Q}=x^\top \mat Q y$ os the  $\mat{Q}$-weighted inner product,  $\| \cdot \|_{\mat Q}$ is the associated norm; we omit the subscripts if $\mat Q = \id $.
Given a function $\psi: \R^q \rightarrow \overline \R \coloneqq \R \cup \{\infty\}$, its set-valued subdifferential operator is denoted by
$\partial \psi: \R^q \rightrightarrows \R^q:x\mapsto  \{ v \in \R^q \mid \psi(z) \geq \psi(x) + \langle v \mid z-x \rangle , \forall  z \in \R^q \}$.

\subsubsection{Graph theory} \label{sec:graphtheory}
A (directed) graph $\g{}=(\mcV,\E{})$
consists of a nonempty set of vertices (or nodes) $\mcV=\{1,2,\dots, V \}$ and a set of edges $\E{}\subseteq \mcV \times \mcV $. We denote by $\neig{}{v}\coloneqq \{ u\mid (u,v) \in \E{} \} $ and $\neigo{}{v}\coloneqq \{ u \mid (v,u) \in \E{} \} $ the set of in-neighbors (or simply neighbors)  and out-neighbors of vertex $ v \in \mcV$, respectively.
A path  from $v_1\in \mcV $ to  $v_N\in \mcV$ of length $T$ is a sequence of vertices $(v_1,v_2,\dots,v_T)$ such that $(v_t,v_{t+1} )\in \E{}$ for all $t=1,\dots, T-1$. $\g{}$ is strongly connected if there exist a path from $u$ to $v$, for all $u,v \in \mcV{}$; in case $\g{}$ is undirected, namely if $(u,v) \in\E{}$ whenever $(v,u) \in\E{}$, we simply say that $\g{}$ is connected. The restriction of the  graph $\g{}$ to a set of vertices $\V{A} \subseteq \V{}$ is defined as $\g{}|_{\V{A}} \coloneqq ( \V{A}, \E{} \cap (\V{A} \times \V{A}))$.   We also write $\g{}=(\V{A}, \V{B},\E{})$ to highlight that $\mc{G}$ is bipartite, namely $\V{} = \V{A} \cup \V{B}$ and $\E{} \subseteq \V{A} \times \V{B}$.
We may associate to  $\g{}$ a weight matrix $\W{} \in \R^{V\times V}$ compliant with $\g{}$, namely $\w{}_{u,v}\coloneqq[\W{}]_{u,v}>0$ if $(v,u)\in \E{}$, $\w{}_{u,v}=0$ otherwise.
$\g{}$ is unweighted if $\w{}_{u,v}=1$ if $(v,u) \in \E{}$. 
Given two graphs $\g{A}=(\V{A},\E{A}) $ and $\g{B}=(\V{B},\E{B}) $, we write  $\g{A} \subseteq \g{B}$ if $\g{A}$ is a subgraph of $\g{B}$, \ie if $\V{A} \subseteq \V{B}$ and $\E{A} \subseteq \E{B}$; we define $\g{A} \bigcup \g{B} \coloneqq  (\V{A} \cup \V{B}, \E{A} \cup \E{B})$.
A
time-varying graph $(\gknocomma{}{k})_{\k}$, $\gknocomma{}{k}=(\V{},\Eknocomma{}{k})$ is $Q$-strongly connected if 
$\bigcup_{t=kQ}^{({k+1})Q-1} \gknocomma{}{t}$  is strongly connected for all $\k$. 
\section{\gls{END} for distributed optimization }\label{sec:mathbackground}

We first recall the general
\gls{END}
framework \cite{Bianchi_minG_TCNS_2023}, 
that describes the information structure in any distributed algorithm. It is characterized by:
\begin{enumerate}
[leftmargin=*]
	\item  a \textbf{set of agents} $\I \coloneqq\{ 1,2,\dots,N \}$;
	\item a given (directed) \textbf{communication graph} $\g{C}=(\I,\E{C})$, over which the agents can exchange information:  agent $i$ can receive data from agent $j$ if and only if
	$j\in\neig{C}{i}$;
	\item a \textbf{variable of interest} $y \in \R^ \n{y}$, partitioned as $y=\col((y_p)_{p\in \P})$,   $\P\coloneqq\{1,\dots,P\}$, $y_p\in\R^{\n{y_p}}$;
	\item a given  \textbf{interference graph} $\g{I}=(\P,\I,\E{I})$,  $\E{I}\subseteq \P \times \I$, specifying  which components of $y$  are indispensable for each agent:  $p\in \neig{I}{i}$ means that agent $i$ needs (an estimate of) $y_p$ to perform some essential local computation;\footnote{For ease of notation, assume $\neigo{I}{p}\neq \varnothing$ for all $\p$.}
    \item  a bipartite \textbf{estimate graph} $\g{E}=(\P,\I,\E{E})$, $\E{E}\subseteq \P \times \I$. Since agents might be unable to access $y$, each agent estimates some of the components $y_p$'s, as specified by the estimate graph: agent $i$ keeps an estimate $\h{y}_{i,p}\in\R^{\n{y_p}}$ of $y_p$ if and only if $p\in \neig{E}{i}$;
    \item $P$ directed \textbf{design  graphs}  $\{\gD{p} \}_{\p}$,  $\gD{p}=(\neigo{E}{p},\ED{p})$. $\gD{p}$  describes how the agents that estimate  $y_p$  exchange their estimates: agent $i$ can receive $\h{y}_{j,p}$ from agent $j$ if and only if $i\in\neigD{p}{j}$;
\end{enumerate}

%

\smallskip
\noindent Specifically, in this paper, we apply the \gls{END} framework to the distributed optimization problems
\begin{align} \label{eq:do}
     \min_{y\in\R^\n{y}} \  \textstyle\sum_{\i} f_i(y),
	\end{align}
 where $f_i:\R^\n{y} \rightarrow \overline \R$ is a private cost function of agent $i$. 
We choose the variable of interest in the \gls{END} framework to coincide with the optimization variable\footnote{Except for Section~\ref{subsec:constraintcoupled}, where we instead select the dual variable as variable of interest; see also \cite{Bianchi_minG_TCNS_2023} for different possible choices (e.g., aggregative values) in variational problems.}. Hence, we partition the optimization variable as $y=\col ((y_p)_{\p})$.

The common approach \cite{NedicOlshevsky_Directed_TAC2015,Scutari_NEXT_TSIPN2016} to  solve \eqref{eq:do} over a communication network $\g{C}$ is to  assign to each agent $\i$ a copy $\tilde{\hy}_i\coloneqq \col ((\h{y}_{i,p})_\p) \in \R^\n{y} $ of the whole decision variable 
	and to let the agents exchange their estimates with every neighbor over $\g{C}$; in \gls{END} notation, we write this as\footnote{In the following, we refer to \eqref{eq:standard} as the  ``standard'' choice, as it is the most widely studied scenario. With $\g{c} = \gD{p}$ we also imply $\W{C} = \WD{p}$.}
	\begin{align}	\label{eq:standard}
		\E{E} =\P \times \I, \qquad  
		\gD{p} =\g{C} \  (\forall \p).
		\end{align}
Yet, in  several applications, like network control and data ranking \cite{NecoaraClipici_Coordinate_SIAM2016},
each cost function $f_i$  depends only on some of the components of $y$, as specified by an interference graph $\g{I}$: $f_i$ depends on $y_p$ if and only if $p\in\neig{I}{i} \subseteq\P$. With some abuse of notation, we highlight this fact by writing 
\begin{align}\label{eq:fipartiallyseparable}
    f_i(y)=f_i ((y_p)_{p\in \neig{I}{i}}) .
\end{align}
Clearly, the standard choice \eqref{eq:standard} for the graphs $\g{E}$ and $\{\gD{p}\}_{\p}$ does not take advantage of the  structure in \eqref{eq:fipartiallyseparable}. In fact, agent $i$ only needs  $(y_p)_{p\in \neig{I}{i}}$ to evaluate (the gradient of) its local cost $f_i$; storing a copy of the whole vector $y$ could be unnecessary and  inefficient -- especially if  $\g{I}$ is sparse and  $\n{y}$ is large.

\subsection{Problem-dependent design,  unified analysis}\label{subsec:design}

From an algorithm design perspective, the graphs $\g{C}$ and $\g{I}$ shall be considered  fixed as part of the problem formulation. In contrast, the graphs $\g{E}$ and $\{\gD{p} \}_{\p}$ are design choices, although with some constraints.

\begin{standing}\label{asm:consistency}
	$ \g{E}$ and $\gD{p}$ are chosen such that $\g{I}\subseteq \g{E}$ and  $\gD{p}\subseteq \g{C}$  for all $\p$.  \hfill $\square$
\end{standing}

In particular,  $\g{E}\subseteq{\g{I}}$ means that each agent estimates at least the components of $y$ which are indispensable for local computation. Moreover, since the estimates are exchanged over $\gD{p}$ and communication can only happen over $\g{C}$, it must hold that $\gD{p}\subseteq \g{C}$. 
In addition, we will always need some level of connectedness for each graph $\gD{p}$, to ensure that the agents can reach consensus on the estimates of $y_p$, as for instance in the following condition. 

\begin{assumption}\label{asm:connected}
    For each $\p$, $\gD{p}$ is undirected and connected. \hfill $\square$
\end{assumption}

Designing $\g{E}$ and $\{\gD{p} \}_{\p}$ to satisfy \cref{asm:consistency} and \cref{asm:connected} is not difficult if $\g{C}$ is itself undirected and connected: one trivial choice is \eqref{eq:standard}. Yet, one wishes to also consider \emph{efficiency} specifications (e.g., in terms of memory allocation, communication or bandwidth) by imposing extra (soft) constraints on $\g{E}$ and $\{\gD{p} \}_{\p}$. We present a simple instance  in \cref{fig:0} and refer to \cite[App.~A]{Bianchi_minG_TCNS_2023} for  more examples. Such an optimal design is in general computationally expensive. Nonetheless, the performance advantages in terms of algorithm execution can be well worth the (one-time) cost of an efficient algorithm design, especially in 
repeated problems  \cite{Mota:LocalDomains:TAC:2014,Bianchi_Timevarying_LCSS2021} (where the same distributed problem is solved multiple times for different values of some parameters/measurements). 

Further, while this design procedure is very problem and goal dependent,  it does not affect the analysis of \gls{END} algorithms. By simply postulating some connectedness property, as in \cref{asm:connected}, we can unify the convergence analysis of standard algorithms (that use \eqref{eq:standard}) with that of methods specifically devised for problems with unique sparsity.

\begin{figure}[t]
\begin{subfigure}[a]{\columnwidth}
\centering
\includegraphics[width=0.8\columnwidth]
{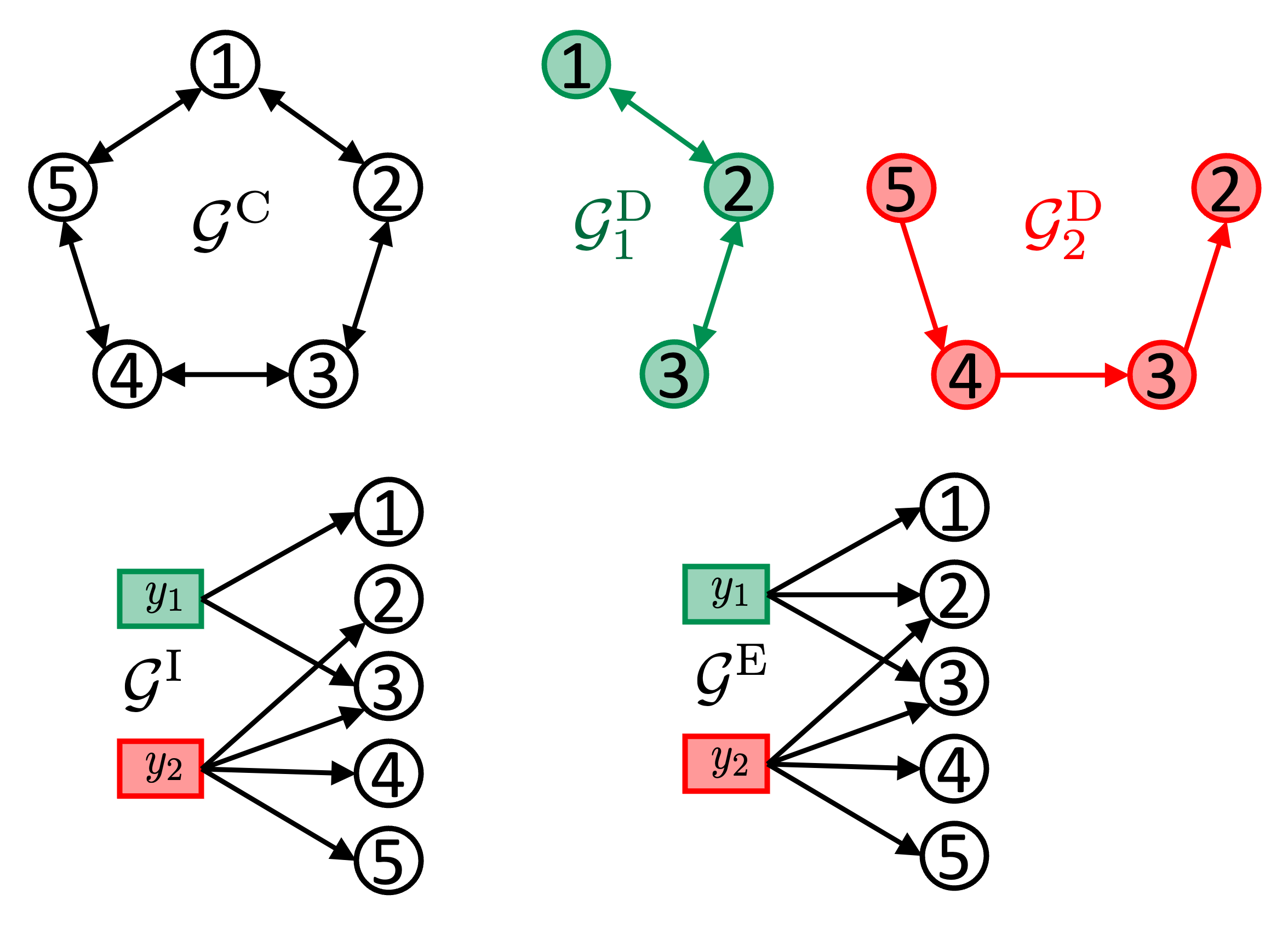}
\vspace{-2em}
\caption{}
\label{fig:0:A}\end{subfigure}
\begin{subfigure}[b]{\columnwidth}
\centering
\includegraphics[width=0.37\columnwidth]{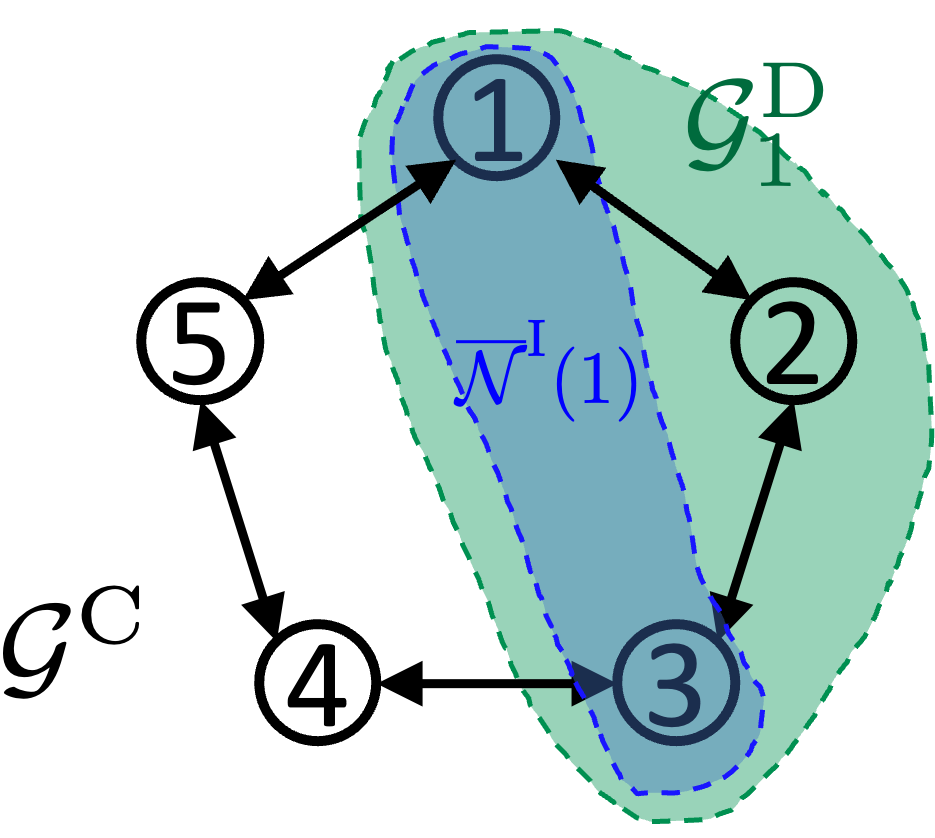}
\vspace{-1.5em}
\caption{}
\label{fig:0:B}
\end{subfigure}
\caption{
 (a) A simple example of \gls{END} design from \cite{Bianchi_minG_TCNS_2023}. On the left, the given communication and interference graphs, with $\mc{I} = \{1,2,3,4,5\}$ and $\mc{P} = \{1,2\}$. On the right a possible choice for the design graphs and the corresponding estimate graphs.
 \\
 (b) We focus on the design of $\gD{1}$. The given efficiency specification is to minimize the number of copies of $y_1$ (i.e., the number of nodes in $\gD{1}$), but provided that $\gD{1}$ is connected and \cref{asm:consistency} is met.
Note that agent $2$ has to estimate  $y_1$ (\ie $1 \in \neig{E}{2}$), even though agent $2$ is not directly affected by $y_1$ (\ie $1 \notin \neig{I}{2}$): otherwise, the information could not travel between nodes $1$ and $3$, which are not communication neighbors. In general, a solution to this design problem can be obtained by solving an Unweighted Steiner Tree problem \cite{Chalermsook:Distributed:Steiner:2005}, for which distributed off-the-shelf algorithms are available \cite{Chalermsook:Distributed:Steiner:2005}.
}\label{fig:0}
\end{figure}

\subsection{\gls{END} Notation}\label{subsec:notation}

We next introduce the stacked \gls{END} notation, crucial in our analysis.
For all $\p$, let $\Ntilde{p}\coloneqq\left| \neigo{E}{p} \right|$ be the number of copies of $y_p$. Recalling that that $\hy_{i,p}$ is the estimate of   $\y_p$ kept by agent $i$, we define:
\begin{align}
	\hy_p & \coloneqq \col ( ( \h{y}_{i,p} )_{i\in\neigo{E}{p}} )\in \R^{  \Ntilde{p} n_{y_p}}, \quad \forall \p;
	\\
	\hy   & \coloneqq\col( ( {\h{y}}_p )_{\p} ) \in \R^\n{\hy},
\end{align}
where   $\n{\hy}\coloneqq\sum_{\p} \Ntilde{p} \n{y_p}$.  Note that $\hy_p$ collects all the copies of $y_p$ kept by different agents. We denote
\begin{align}\label{eq:WDbs}
    \WDbs \coloneqq \diag ( ( \WD{p}\otimes \textrm{I}_{\n{y{_p}}})_{\p} ),
\end{align}
where $ \WD{p}$ is the weight matrix of $\gD{p}$.  Let
\begin{align}\label{eq:consensusp}
	\Ebs{p} &\coloneqq\{\hy_p \in \R^{N_p \n{y_p}} \mid \hy_p = \1_{N_p} \otimes v, v\in\R^{\n{y_p}} \},
\end{align}
be the consensus space for $\hy_p $ (where all the estimates of $\y_p$ are equal);   $\Ebs{} \coloneqq \textstyle \prod_{\p} \Ebs{p}$ be the overall consensus space;
\begin{align}\label{eq:2consensus}
    \Ebs{}(\y)\coloneqq\col(( \1_{N_p} \otimes \y_p )_{\p} ).
\end{align}
For each $\p$, for each $i\in \neigo{E}{p}$, we denote by \begin{align}\label{eq:ip}
    i_p\coloneqq \textstyle \sum_{j\in \neigo{E}{p}, \, j\leq i} 1
\end{align} 
the  position of $i$ in the ordered set of nodes  $\neigo{E}{p}$. For all $i \in \neig{E}{p}$, we denote by $\Rmc_{i,p}\in \R^{\n{y_p} \times {N_p \n{y_p}}}$ the matrix that selects $\hy_{i,p}$ from $\hy_p$, \ie  $\hy_{i,p}= \Rmc_{i,p} \hy_p$.

Sometimes it is useful to define agent-wise quantities, indicated with a tilde. Let 
\begin{align}
	\hyt_i \coloneqq  \col ( ( \hy_{i,p} )_{p\in \neig{E}{i}} ), \
	\hyt \coloneqq\col( ( \hyt_i )_{\i} ) \in\R^{\n{\hy}},
\end{align}
where $\hyt_i$ collects all the estimates kept by agent $i$.

\section{Distributed optimization algorithms}\label{sec:do_algorithms}

In this section,  we leverage the \gls{END} framework to extend several distributed optimization algorithms by exploiting partial coupling. We recall the cost-coupled problem in \eqref{eq:do}:
\begin{align}
\label{eq:do1}
\min_{ y\in\R^{\n{y}}} \ {f}(y) \coloneqq \textstyle\sum_{\i} f_i( (y_p)_{p\in\neig{I}{i}} ),
\end{align}
where $f_i$ is a private cost function of agent $i$, and  we choose the optimization variable $y = \col( (y_p)_{\p})$ as the variable of interest in the \gls{END}; with some usual overloading, we write
\begin{align} 
f_i(y) = f_i(  (y_p)_{p\in \neig{E}{i}}  ) = f_i ( (y_p)_{p\in \neig{I}{i}}  ).
\end{align}
Let $\set Y^\star $ be the solution set of \eqref{eq:do1}, assumed to be nonempty.

\subsection{\gls{END} dual methods} \label{sec:dualmethods}
Under \cref{asm:consistency}, we  can recast \eqref{eq:do1} by introducing local estimates and consensus constraints.The   following redormulation is not novel, and in fact it was employed for the dual methods in \cite{Mota:LocalDomains:TAC:2014,Alghunaim_SparseConstraints_TAC2020,Alghunaim_Stochastic_TAC2020}. 
\begin{proposition}\label{prop:dual_reformuation}
	Let \cref{asm:connected} hold. Then, problem \eqref{eq:do1} is equivalent to:
		\begin{equation}\label{eq:dual_reformulation}
		\left\{
		\begin{aligned}
			\min_{\hyt \in\R^\n{\hy}}&  \  \textstyle\sum_{\i} f_i( \hyt_i ) = f_i((\hy_{i,p})_{p\in \neig{E}{i}})
			\\
			\text{s.t.} & \  \hy_{i,p}=\hy_{j,p}  \quad  \forall \p,\forall (i,j)\in \ED{p}. 
			\end{aligned} \right.
			\\[-1.1em]
		\end{equation}
	\hfill$\square$
\end{proposition}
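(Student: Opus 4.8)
The plan is to prove equivalence by showing that the feasible set of \eqref{eq:dual_reformulation} coincides with the consensus space $\Ebs{}$ of \eqref{eq:consensusp}, and that on this set the stacked cost collapses to the original objective $f$ in \eqref{eq:do1}. First I would argue that the per-edge constraints $\hy_{i,p}=\hy_{j,p}$, $(i,j)\in\ED{p}$, are equivalent to full agreement $\hy_p\in\Ebs{p}$. By \cref{asm:connected}, each $\gD{p}=(\neigo{E}{p},\ED{p})$ is undirected and connected, so any $i,j\in\neigo{E}{p}$ are joined by a path; chaining the edge-wise equalities along it (exploiting that the constraint is symmetric, as $\gD{p}$ is undirected) gives $\hy_{i,p}=\hy_{j,p}$ for \emph{all} pairs. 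Hence $\hy_p=\1_{N_p}\otimes v_p$ for a common $v_p\in\R^{\n{y_p}}$, i.e. $\hy_p\in\Ebs{p}$; conversely every such $\hy_p$ is feasible. Stacking over all $\p$, the feasible set is exactly $\Ebs{}=\prod_{\p}\Ebs{p}$. The common value $v_p$ is well defined since $\neigo{E}{p}\supseteq\neigo{I}{p}\neq\varnothing$ by \cref{asm:consistency}.

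Next I would set up the cost-preserving bijection. Writing $y=\col((v_p)_{\p})\in\R^{\n{y}}$, a feasible point is $\hyt=\Ebs{}(y)$ as in \eqref{eq:2consensus}, with $\hyt_i=(y_p)_{p\in\neig{E}{i}}$, so the reformulated objective equals $\sum_{\i} f_i((y_p)_{p\in\neig{E}{i}})$. Since $\g{I}\subseteq\g{E}$ by \cref{asm:consistency}, the indices in $\neig{E}{i}\setminus\neig{I}{i}$ do not influence $f_i$, and by the overloading convention $f_i((y_p)_{p\in\neig{E}{i}})=f_i((y_p)_{p\in\neig{I}{i}})$; thus the objective is $\sum_{\i} f_i((y_p)_{p\in\neig{I}{i}})=f(y)$. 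The map $y\mapsto\Ebs{}(y)$ is therefore a bijection between $\R^{\n{y}}$ and the feasible set of \eqref{eq:dual_reformulation} that preserves the objective value, which immediately yields that both problems attain the same optimal value and that their minimizers are in one-to-one correspondence.

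The argument is otherwise routine, and the single step that carries genuine content is the passage from the local, edge-wise consensus constraints to global agreement of all copies of $y_p$: this is precisely where connectedness of $\gD{p}$ in \cref{asm:connected} is indispensable, since without it the feasible set could strictly contain $\Ebs{}$ and the reformulation would admit spurious, non-consensual solutions. The reduction of the dependence of $f_i$ from $\neig{E}{i}$ to $\neig{I}{i}$ is immediate from $\g{I}\subseteq\g{E}$, so I anticipate no difficulty there.
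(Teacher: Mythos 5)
Your proof is correct: connectedness of each undirected $\gD{p}$ (\cref{asm:connected}) lets you chain the edge-wise equalities along paths into full agreement, so the feasible set of \eqref{eq:dual_reformulation} is exactly $\Ebs{}$, and the overloading convention $f_i((y_p)_{p\in\neig{E}{i}})=f_i((y_p)_{p\in\neig{I}{i}})$ --- valid because $\g{I}\subseteq\g{E}$ under \cref{asm:consistency} --- collapses the stacked cost to that of \eqref{eq:do1}, giving the objective-preserving bijection $y\mapsto\Ebs{}(y)$. The paper itself states this proposition without proof, deferring to \cite{Mota:LocalDomains:TAC:2014,Alghunaim_SparseConstraints_TAC2020,Alghunaim_Stochastic_TAC2020}, and your argument is precisely the standard one underlying those references, so it matches the intended reasoning.
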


If $\gD{p}=\g{C}$ for all $\p$, then \eqref{eq:dual_reformulation} reverts to the formulation used in standard dual methods \cite{Boyd_ADMM_2010,Bastianello_ADMM_TAC2021,Uribe:etal:AdualApproach:OMS:2021}: these algorithms require each agent to store a copy of the whole optimization vector.  Instead, choosing a sparse $\g{E}$   can conveniently reduces the number  constraints in  \eqref{eq:dual_reformulation}.
Regardless, due to its structure (\ie separable costs and coupling constraints compliant with $\gD{p}$, hence with the communication graph), the problem in  \eqref{eq:dual_reformulation} can be immediately  solved via several  established Lagrangian-based algorithms (provided that the functions $f_i$'s are sufficiently well-behaved). In practice, this allows one to extend most (virtually all) the existing dual methods to the \gls{END} framework. 

\begin{example}[\gls{END} ADMM]
	Let  \cref{asm:connected} hold, and assume that $f_i$ is proper closed convex, for all $\i$. Applying the \gls{ADMM}  in \cite{Bastianello_ADMM_TAC2021} to  \eqref{eq:dual_reformulation}\footnote{After decoupling the constraints in  \eqref{eq:dual_reformulation} by introducing auxiliary bridge variables as $\{ \hy_{i,p}=h_{(i,j),p}, h_{(i,j),p}=h_{(j,i),p}, h_{(j,i),p}=\hy_{j,p} \}$;  the approach is standard and we refer to \cite{Bastianello_ADMM_TAC2021} for a complete derivation.} results in the iteration
	\begin{subequations}
	 \label{eq:ENDADMM}
		\begin{align}
			 \hyt_{i} ^{k+1} & =\underset{\hyt_{i}}{\argmin} \,
			\begin{aligned}[t]
			\Bigl\{
			f_i (\hyt_{i})   +  \textstyle\sum_{p\in\neig{E}{i}}\textstyle \sum_{j\in \neigD{p}{i}} \Bigl(
			\| \hy_{i,p} \|^2 
			\label{eq:ENDADMM:a}
	     	\\
			-  \langle z_{i,j,p},   \hy_{i,p} \rangle \Bigr) \Bigr\} \end{aligned}
			\\
			z_{i,j,p}^{k+1} &=  (1-\alpha )z_{i,j,p}^{k} -\alpha z_{j,i,p}^{k} +2\alpha \hy_{j,p}^{k+1},
			\label{eq:ENDADMM:b}
		\end{align}
		\end{subequations}
	where $z_{i,j,p}$  is an auxiliary variable kept by agent $i$,  for each $\i$, $p\in \neig{E}{i}$, $j\in \neigD{p}{i}$. Then, for any $\alpha \in (0,1)$,  $\hy_{i,p}$ converges to $y_p^\star$, where $y^\star=\col((y_p^\star)_{\p})$ is a solution of \eqref{eq:do1}, for all $\i$ and $p\in\neig{E}{i}$.
	Note that performing the update \eqref{eq:ENDADMM:b} requires agent $i$ to receive data from its neighbor $j \in \neigD{p}{i}$ (while \eqref{eq:ENDADMM:b} requires no communication). 
	If $\gD{p}=\g{C}$ for all $\p$, then the method retrieves the standard \gls{ADMM} for consensus optimization \cite[Eq.~(13)]{Bastianello_ADMM_TAC2021}. Yet, in general \eqref{eq:ENDADMM}  requires the agents to store and exchange  less (auxiliary) variables.
	\hfill $\square$
\end{example}

While 
\cref{prop:dual_reformuation} would hold even if the graphs $\gD{p}$'s are only strongly connected, distributed algorithms to efficiently solve \eqref{eq:dual_reformulation} typically require undirected communication. 


\subsection{\gls{END} ABC  algorithm}
\label{sec:ABC}
In this subsection, we propose an \gls{END} version of the ABC algorithm, recently developed in \cite{Scutari_Unified_TSP2021}. For  differentiable costs $f_i$'s, let us consider the iteration: $(\forall \i)(\forall p \in \neig{E}{i})$
\begin{subequations}\label{eq:ENDABC_i}
	\begin{align}
		\label{eq:ENDABC_i:a}
		\hy_{i,p}^{k+1} &= - \hz_{i,p}^{k}+
		 \sum_{j \in \neigo{E}{p} } [\mat A_p]_{i_p,j_p} \hy_{j,p}^k - \gamma  [ \mat B_p]_{i_p,j_p} \grad{y_p}  f_j(\hyt_{j}^k)
		\\
		\label{eq:ENDABC_i:b}
		\hz_{i,p}^{k+1} & = \hz_{i,p}^{k} +    \sum_{j \in \neigo{E}{p} } [\mat C_p]_{i_p,j_p}  \hy_{j,p}^{k+1},
	\end{align}
\end{subequations}
where $\hz_{i,p} \in \R^{\n{y_p}}$ is a local variable kept by agent $i$; for all $\p$,  $\mat A_p, \mat  B_p, \mat C_p$ are matrices in $ \R^{N_p \times N_p}$; $\gamma>0$ is a step size; and we recall the notation in \eqref{eq:ip}. Note that if the matrices $\mat A_p, \mat B_p, \mat C_p$'s are compliant with the corresponding graphs $\gD{p}$'s (\eg $\mat A_p = \mat B_p = \mat C_p = \WD{p}$), then the iteration \eqref{eq:ENDABC_i} is distributed. We can rewrite \eqref{eq:ENDABC_i}  in stacked form as
\begin{subequations}\label{eq:ENDABC}
	\begin{align}
		\label{eq:ENDABC:a}
		\hy^{k+1} &=   \mat A	\hy^{k} - \gamma \mat B \grad{\hy} \fbs  (\hy^{k}) - \hz^{k}
		\\
		\label{eq:ENDABC:b}
		\hz^{k+1} & = \hz^{k} +  \mat C  \hy^{k+1},
	\end{align}
\end{subequations}
where $ \mat A\coloneqq \diag (( \mat A_p\otimes \id_{\n{y_p}}  )_{\p} )$, $ \mat B\coloneqq \diag (( \mat B_p\otimes \id_{\n{y_p}}  )_{\p}  )$,  $ \mat C\coloneqq \diag (( \mat C_p\otimes \id _{\n{y_p}}  )_{\p} )$ belong to $\R^{\n{\hy}\times \n{\hy}}$, $\hz \coloneqq  \col ( ( \hz_p )_{\p} )$ with $\hz_p \coloneqq \col ((\hz_{i,p} )_{i\in \neigo{E}{p}})$, and $\fbs(\hy)\coloneqq\sum_{\i} f_i (\hyt_i)$. If  $\gD{p} = \g{C}$ for all $p$, and $\mat A_p$, $\mat B_p$, $\mat C_p$ are independent of $p$, then \eqref{eq:ENDABC} retrieves the ABC algorithm \cite[Eq.~3]{Scutari_Unified_TSP2021}.

We next charachterize the asymptotic behavior of \eqref{eq:ENDABC} for appropriately chosen $\mat A$, $\mat B$, $\mat C$ (all the proofs are in appendix). We recall the notation in \eqref{eq:consensusp}-\eqref{eq:2consensus}.
\begin{theorem}\label{th:ENDABC}
    Let $\mat D\coloneqq \diag( (\mat D_p\otimes \id_{\n{y_p
	}} )_{\p} )$, for some  $\{ \mat D_p \in \R^{N_p \times N_p} \}_{ \p }$. Assume that   $f_i$ is $L$-smooth and convex for each $\i$, and that:
	\begin{thmlist2}
		\item 
		\label{C1:th:ENDABC}  $\mat A=\mat B \mat D$ and $\mat B\succcurlyeq 0$, $\mat D\succ 0$;
		\item
		\label{C2:th:ENDABC} $(\forall \hy\in \Ebs{}) $ $\mat D \hy = \hy$, $ \mat B \hy = \hy$;
		\item
		\label{C3:th:ENDABC} $\mat C\succcurlyeq 0$,  $\Null(\mat C)=\Ebs{}$;
		\item 
		\label{C4:th:ENDABC} $\mat B$ and $\mat C$ commute: $\mat B \mat C =\mat C \mat B$;
		\item
		\label{C5:th:ENDABC} $\id -\frac{1}{2} \mat C -\sqrt{\mat B} \mat D \sqrt{\mat B} \succcurlyeq 0$.
	\end{thmlist2}
	Let $y^\star \in \set Y ^ \star$, $\hy^\star \coloneqq \Ebs{} (y^\star) $,
	and consider the  merit function
	\begin{equation}
		\mathfrak M (\hy)\coloneqq \max \{ \| \Piperp{} \hy \| \|\grad{\hy} \fbs( \hy^\star ) \|, | \fbs(\hy ) - \fbs(\hy^\star) |  \}.
	\end{equation}
	Then, for any $\hy^0\in\R^{\n{\hy}}$, $\hz^0 = \0_{\n{\hy}}$, $ \gamma \in  (\frac{\eigmin (D) }{L})$, the sequence $(\hy^k)_\k$ generated by \eqref{eq:ENDABC} satisfies
	\begin{equation}\label{eq:avg_conv_ABC}
		\textstyle  \mathfrak M \left( \hyavg^k \right) \leq \mc{O} (\frac{1}{k}),
	\end{equation}
	for all $\k$, where $\hyavg^k \coloneqq \frac{1}{k} \sum_{t=1} ^k \hy^t$.
	\hfill $\square$
\end{theorem}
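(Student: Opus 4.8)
The plan is to read \eqref{eq:ENDABC} as a primal--dual iteration for the consensus-constrained reformulation \eqref{eq:dual_reformulation}, treating $\hy$ as the primal variable and $\hz$ as the multiplier that accumulates the consensus violation, and then to extract an ergodic $\mc{O}(1/k)$ bound on the saddle-point gap by a telescoping energy argument.

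First I would identify the fixed point. Since $\hz^0=\0$, iterating \eqref{eq:ENDABC:b} gives $\hz^{k}=\mat C\sum_{t=1}^{k}\hy^t$, so every $\hz^k$ lives in $\range(\mat C)=\Ebs{}^\perp$ by \ref{C3:th:ENDABC}. Imposing stationarity in \eqref{eq:ENDABC} and using \ref{C1:th:ENDABC}--\ref{C2:th:ENDABC} (whence $\mat A\hy^\star=\mat B\mat D\hy^\star=\hy^\star$ for $\hy^\star\in\Ebs{}$) yields $\mat C\hy^\star=\0$, i.e. $\hy^\star$ is in consensus, together with $\hz^\star=-\gamma\,\mat B\,\grad{\hy}\fbs(\hy^\star)$. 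First-order optimality of $y^\star$ for \eqref{eq:do1} forces $\grad{\hy}\fbs(\hy^\star)\in\Ebs{}^\perp$, and since $\mat B$ is symmetric and acts as the identity on $\Ebs{}$ it leaves $\Ebs{}^\perp$ invariant; hence $\hz^\star\in\Ebs{}^\perp=\range(\mat C)$, so $(\hy^\star,\hz^\star)$ is a consistent KKT pair.

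The technical core is a single-step inequality. Convexity of $\fbs=\sum_i f_i$ gives $\fbs(\hy^k)-\fbs(\hy^\star)\le\langle\grad{\hy}\fbs(\hy^k),\hy^k-\hy^\star\rangle$, while $L$-smoothness gives the descent estimate $\fbs(\hy^{k+1})\le\fbs(\hy^k)+\langle\grad{\hy}\fbs(\hy^k),\hy^{k+1}-\hy^k\rangle+\tfrac{L}{2}\|\hy^{k+1}-\hy^k\|^2$. I would then substitute $\gamma\,\mat B\,\grad{\hy}\fbs(\hy^k)=\mat A\hy^k-\hy^{k+1}-\hz^k$ from \eqref{eq:ENDABC:a} to rewrite every gradient inner product through the iterates and the multiplier, and fold in the dual update \eqref{eq:ENDABC:b} to produce telescoping differences in a $\mat C^\dagger$-weighted norm of $\hz$. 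Because $\mat B$ and $\mat C$ commute \ref{C4:th:ENDABC} they are simultaneously diagonalizable, and since $\sqrt{\mat B}\mat D\sqrt{\mat B}$ carries the spectrum of $\mat A=\mat B\mat D$, the matrix condition \ref{C5:th:ENDABC} together with the step bound $\gamma\le\eigmin(\mat D)/L$ is exactly what forces the aggregate quadratic remainder to be nonpositive. The result is a per-step estimate
\begin{align*}
\gamma\big(\fbs(\hy^{k+1})-\fbs(\hy^\star)-\langle\hz^\star,\hy^{k+1}\rangle\big)\le E^k-E^{k+1},
\end{align*}
with $E^k\coloneqq\tfrac12\|\hy^k-\hy^\star\|^2+\tfrac{\gamma}{2}\|\hz^k-\hz^\star\|_{\mat C^\dagger}^2\ge 0$, whose left-hand side is nonnegative by the saddle property and hence also certifies $E^{k}\le E^1$.

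Finally I would sum from $t=1$ to $k$, divide by $\gamma k$, and invoke Jensen's inequality, obtaining $\fbs(\hyavg^k)-\fbs(\hy^\star)-\langle\hz^\star,\hyavg^k\rangle\le E^1/(\gamma k)$. Since $\hz^\star\in\Ebs{}^\perp$ and $\hy^\star\in\Ebs{}$, the inner product collapses to $\langle\hz^\star,\Piperp{}\hyavg^k\rangle$; paired with the convexity lower bound $\fbs(\hyavg^k)-\fbs(\hy^\star)\ge-\|\grad{\hy}\fbs(\hy^\star)\|\,\|\Piperp{}\hyavg^k\|$, this delivers two-sided control of the suboptimality entry of $\mathfrak M$. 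The consensus entry is handled through the accumulation identity $\mat C\hyavg^k=\hz^k/k$: boundedness of $E^k$ bounds $\|\hz^k\|$, so invertibility of $\mat C$ on $\Ebs{}^\perp$ (\ref{C3:th:ENDABC}) gives $\|\Piperp{}\hyavg^k\|=\mc{O}(1/k)$, and substituting back closes the $\mc{O}(1/k)$ bound on $\mathfrak M(\hyavg^k)$. The main obstacle is the single-step inequality: organizing the matrix cross terms so that \ref{C4:th:ENDABC}--\ref{C5:th:ENDABC} apply cleanly, while coping with the possible singularity of $\mat B$ (which equals the identity only on $\Ebs{}$) and keeping each estimate in the correct weighted norm.
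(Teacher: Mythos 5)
Your high-level plan (read \eqref{eq:ENDABC} as a primal--dual scheme for the consensus-constrained reformulation, prove a one-step telescoping bound, average, apply Jensen, and recover the consensus rate from the identity $\mat C\hyavg^k=\hz^k/k$) is the right kind of argument, and that last recovery step is sound in itself. The genuine gap is the single-step inequality, which you correctly flag as the main obstacle but then assert: with your energy $E^k=\tfrac12\|\hy^k-\hy^\star\|^2+\tfrac{\gamma}{2}\|\hz^k-\hz^\star\|^2_{\mat C^\dagger}$ it is \emph{false} under the theorem's assumptions. The primal map in \eqref{eq:ENDABC:a} is $\hy\mapsto\mat A\hy=\mat B\mat D\hy$, which is the identity only on $\Ebs{}$; off the consensus subspace it distorts the geometry, and an energy whose primal part is the plain Euclidean norm cannot match it. Concretely, take $P=1$, $N_1=2$, scalar estimates, $f_1=f_2\equiv 0$ (so $\set Y^\star=\R$, $\grad{\hy}\fbs(\hy^\star)=\0$, $\hz^\star=\0$, and every $\gamma>0$ is admissible since $f\equiv 0$ is $L$-smooth for every $L>0$), and choose $\mat D=\id$, $\mat B=\mat A=\Piparallel{}+\tfrac12\Piperp{}$, $\mat C=\Piperp{}$: conditions \ref{C1:th:ENDABC}--\ref{C5:th:ENDABC} all hold, with \ref{C5:th:ENDABC} holding with equality. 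Writing $y_k,z_k$ for the components of $\hy^k,\hz^k$ along $\range(\Piperp{})$, the iteration reads $y_{k+1}=\tfrac12 y_k-z_k$, $z_{k+1}=\tfrac12 y_k$, and from $(y_0,z_0)=(1,0)$ one gets $(y_2,z_2)=(-\tfrac14,\tfrac14)$, $(y_3,z_3)=(-\tfrac38,-\tfrac18)$, hence $E^3-E^2=\tfrac{5-3\gamma}{128}>0$ whenever $\gamma<\tfrac53$ --- yet your per-step inequality (whose left-hand side vanishes here) demands $E^{k+1}\le E^k$. The dynamics is a damped rotation, and among energies of your diagonal form monotonicity for all initial states holds only at the single value $\gamma=2$; no choice of relative weight between the two terms works for all admissible step sizes. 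A secondary inconsistency: with your $\hz^\star=-\gamma\mat B\grad{\hy}\fbs(\hy^\star)$, the quantity $\fbs(\hy)-\fbs(\hy^\star)-\langle\hz^\star,\hy\rangle$ is \emph{not} nonnegative (the saddle property requires the multiplier $\grad{\hy}\fbs(\hy^\star)$, without the factors $\gamma$ and $\mat B$), so even the bound $E^k\le E^1$ that you need for $\|\hz^k\|=\mc O(1)$ is unavailable as stated.

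This is exactly the difficulty the paper's proof is engineered around. There, one first shows by induction (using \ref{C1:th:ENDABC}, \ref{C4:th:ENDABC} and $\hz^0=\0$) that $\hy^k,\hz^k\in\range(\mat B)$, and then ``unravels'' the iteration through preimages, $\hy^k=\mat B\hyu^k$, $\hz^k=\gamma\mat B\hzu^k$, obtaining \eqref{eq:unraveled_ABCD}; the ergodic estimate (\cref{lem:23Scutari}, adapted from \cite[Lem.~23]{Scutari_Unified_TSP2021}) is then proved for the saddle function $\Phi(\hy,\hz)=\fbs(\hy)+\langle\hy,\hz\rangle$ \emph{uniformly} over dual test points $\hz\in\Ebsperp{}$, with a bound $h(\hy,\hz)$ involving the $\mat D$-weighted primal norm and the dual weight $\|\mat B-\Piparallel{}\|/\underline{\uplambda}$ --- precisely the weighted geometry your Euclidean energy misses. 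Finally, instead of fixing the multiplier at a KKT value, the paper exploits the freedom in $\hz$ by choosing $\hz=2\|\hzstar\|\Piperp{}\hyavg^k/\|\Piperp{}\hyavg^k\|$ with $\hzstar=-\grad{\hy}\fbs(\hy^\star)$, which produces both entries of $\mathfrak M$ simultaneously, whereas you obtain the consensus entry separately from $\mat C\hyavg^k=\hz^k/k$. To repair your argument you would have to redo the one-step analysis in the unraveled variables and weighted norms, at which point you would essentially be reproducing the paper's proof.
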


It is shown in \cite{Scutari_Unified_TSP2021} that many celebrated schemes for consensus optimization can be retrieved as particular instances of the ABC algorithm, by suitably choosing  the matrices $\mat A$, $\mat B$, $\mat C$  \cite[Tab.~2]{Scutari_Unified_TSP2021}: 
EXTRA \cite{Shi_EXTRA_SIAM2015},
  NEXT \cite{Scutari_NEXT_TSIPN2016},
 DIGing \cite{Nedic_DIGing_SIAM2017},
 NIDS \cite{LiEtal_NIDS_TSP2019},
and others. 
 \cref{th:ENDABC} allows the extension of each of these methods to the  \gls{END} framework. We only discuss an example below; for the other schemes, the analysis can be carried out analogously, see also \cite[§III.A]{Scutari_Unified_TSP2021}.

\begin{example}[\gls{END} AugDGM]
	The following gradient-tracking algorithm is the \gls{END} version of \cite[Alg.~1]{Xu_AugDGM_CDC2015}: $(\forall \i)(p\in \neig{E}{i})$
	\begin{subequations}
			\begin{align*}
				\hy_{i,p}^{k+1} &= \hspace{-0.7em}  \sum_{j \in \neigD{p }{i}}
				[\WD{p}]_{i_p,j_p}  ( \hy_{j,p}^k-\gamma \hv_{j,p}^k)
				\\
	    	\hv_{i,p}^{k+1} & = \hspace{-0.7em} \sum_{j \in \neigD{p }{i}} \hspace{-0.5em}
				[\WD{p}]_{i_p,j_p}  (\hv_{j,p}^{k} +   \grad{y_p} f_j (\hyt^{k+1}) -  \grad{y_p} f_j (\hyt^{k})),
			\end{align*}
	\end{subequations}
	or, in stacked form,
		\begin{subequations} \label{eq:NEXT_ABC}
		\begin{align}
		\hy^{k+1} &=  \WDbs (\hy^k - \gamma \hv^k)
		\\
		\hv^{k+1} & =  \WDbs (\hv^{k} +   \grad{\hy} \fbs (\hy^{k+1}) -  \grad{\hy} \fbs (\hy^{k}) );
		\end{align}
	\end{subequations}
	we impose $\hy(0) = \0$, $\hv(0) = \WDbs{} \grad{\hy} \fbs (\hy^0)$. Here, $\hv_{i,p}$ represents an estimate of $\grad{y_p} \sum_{j \in \mc{I}} f_j (y)/N_p$ kept by agent $i$. Note that  agent $i$ only estimates and exchanges the components of the cost gradient (and of the optimization variable) specified by $\neig{E}{i}$, instead of the whole vector as in \cite[Alg.~1]{Xu_AugDGM_CDC2015} -- the two algorithms coincide only if  $\WD{p}=\W{C}$ for all $\p$.
	By eliminating the $\hv$ variable in \eqref{eq:NEXT_ABC}, we obtain 
	\begin{align}\label{eq:ABC_NEXT_elz}
		\begin{aligned}
			\hy^{k+2} &=  2\WDbs{} \hy^{k+1} -  {(\WDbs{} )}^2  \hy^{k}
			\\
			& \hphantom{{}={} } 
			- \gamma  {(\WDbs{} )}^2  
			(\grad{\hy} \fbs (\hy^{k+1}) -  \grad{\hy} \fbs (\hy^{k})) .
		\end{aligned}
	\end{align}
	Instead, eliminating $\hz$ from \eqref{eq:ENDABC} we get
	\begin{align}
		\begin{aligned}
			\hy^{k+2} &=  (\id-\mat C+\mat A ) \hy^{k+1} - \mat A  \hy^{k}
			\\
			& \hphantom{{}={} } - \gamma\mat B  (\grad{\hy} \fbs (\hy^{k+1}) -  \grad{\hy} \fbs (\hy^{k})) .
		\end{aligned}
	\end{align}
	which retrieves  \eqref{eq:ABC_NEXT_elz}  for $\mat A=\mat B={(\WDbs)
	}^2$, $\mat C={(\id- \WDbs)}^2$.\footnote{In fact, the sequence $(\hy^k)$ generated by \eqref{eq:ENDABC} coincide with that generated by \eqref{eq:NEXT_ABC}
    for the given initialization.} This choice satisfies the conditions in \cref{th:ENDABC}, with $\mat D=\id$, under  \cref{asm:connected} and doubly stochasticity.\footnote{Note that the properties of $\WD{p}$'s easily translate to $\WDbs$ due to the block structure in \eqref{eq:WDbs}. For instance, under the stated conditions, clearly $\Null(I -\WD{p})^2 = \range(\1_{N_p}) $ and $\Null(I -\WDbs)^2 = \Ebs{}$. } 
	\begin{corollary}
		Let \cref{asm:connected} hold; assume that $\WD{p} \1_{N_p}=\1_{N_p}$, $\WD{p}={\WD{p} }^\top$, for all $ \p$, and that $f_i$ is $L$-smooth and convex, for all $\i$. Then, for any $\gamma \in (0,\frac{1}{L})$ the rate \eqref{eq:avg_conv_ABC} holds for \eqref{eq:NEXT_ABC}.  \hfill $\square$
	\end{corollary}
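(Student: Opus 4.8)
The plan is to recognize the END AugDGM recursion \eqref{eq:NEXT_ABC} as a particular instance of the END ABC iteration \eqref{eq:ENDABC}, so that \cref{th:ENDABC} applies verbatim. Concretely, I would take $\mat A = \mat B = (\WDbs)^2$, $\mat C = (\id - \WDbs)^2$ and $\mat D = \id$, as anticipated before the statement. First I would check that the two iterations generate the same sequence $(\hy^k)_\k$: eliminating $\hv$ from \eqref{eq:NEXT_ABC} gives the second-order recursion \eqref{eq:ABC_NEXT_elz}, eliminating $\hz$ from \eqref{eq:ENDABC} gives the displayed second-order recursion for ABC, and the two match termwise for the above matrices. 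It then remains only to align the two initial iterates: with $\hy^0 = \0$ in both schemes and $\hz^0 = \0$, the first ABC step yields $\hy^1 = -\gamma(\WDbs)^2 \grad{\hy}\fbs(\hy^0)$, which is exactly the first AugDGM step $\hy^1 = \WDbs(\hy^0 - \gamma \WDbs \grad{\hy}\fbs(\hy^0))$ evaluated at $\hy^0 = \0$; hence the sequences coincide for all $\k$.

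Next I would verify the hypotheses (a)--(e) of \cref{th:ENDABC}. Symmetry of each $\WD{p}$ together with $\WD{p}\1_{N_p} = \1_{N_p}$ makes $\WDbs$ symmetric and block-wise doubly stochastic, so its spectrum lies in $[-1,1]$ and, by \cref{asm:connected}, its eigenvalue $1$ has eigenspace exactly $\Ebs{}$, with $\Null(\id - \WDbs) = \Ebs{}$. Since $\mat A, \mat B, \mat C$ are all polynomials in $\WDbs$, they are symmetric, simultaneously diagonalizable and mutually commuting, which gives (d) immediately. Condition (a) holds because $\mat A = (\WDbs)^2 = \mat B \mat D$ with $\mat B = (\WDbs)^2 \succcurlyeq 0$ and $\mat D = \id \succ 0$; (b) holds because $\WDbs$ fixes $\Ebs{}$, whence $\mat B \hy = \hy = \mat D \hy$ for $\hy \in \Ebs{}$; and (c) holds because $\mat C = (\id - \WDbs)^2 \succcurlyeq 0$ with $\Null(\mat C) = \Null(\id - \WDbs) = \Ebs{}$. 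Finally, $\eigmin(\mat D) = 1$ specializes the admissible step-size range of \cref{th:ENDABC} to $\gamma \in (0, \tfrac{1}{L})$, matching the statement.

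The hard part will be condition (e), $\id - \tfrac{1}{2} \mat C - \sqrt{\mat B}\mat D \sqrt{\mat B} \succcurlyeq 0$. Because $\mat D = \id$ one has $\sqrt{\mat B}\mat D \sqrt{\mat B} = \mat B = (\WDbs)^2$, so the left-hand side reduces to $\tfrac{1}{2} \id + \WDbs - \tfrac{3}{2} (\WDbs)^2$; diagonalizing $\WDbs$, (e) is equivalent to the scalar inequality $\tfrac{1}{2} + \lambda - \tfrac{3}{2} \lambda^2 \geq 0$, i.e. $(3\lambda + 1)(\lambda - 1) \leq 0$, for every eigenvalue $\lambda$ of $\WDbs$ --- that is, to $\lambda \in [-\tfrac{1}{3}, 1]$. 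Thus the genuine content of the corollary is a lower spectral bound $\eigmin(\WDbs) \geq -\tfrac{1}{3}$, which is not implied by symmetric double stochasticity alone (that only forces $\lambda \in [-1,1]$) but does hold under the customary positive-semidefiniteness/laziness assumption on the mixing matrices (spectrum in $[0,1]$). Once this lower bound is secured, all hypotheses of \cref{th:ENDABC} are met and the rate \eqref{eq:avg_conv_ABC} for \eqref{eq:NEXT_ABC} follows directly.
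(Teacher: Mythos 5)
Your proposal follows exactly the same route as the paper: identify \eqref{eq:NEXT_ABC} with \eqref{eq:ENDABC} for $\mat A=\mat B=(\WDbs)^2$, $\mat C=(\id-\WDbs)^2$, $\mat D=\id$, match the eliminated second-order recursions and the two initial iterates, and invoke \cref{th:ENDABC}; your verification of conditions \ref{C1:th:ENDABC}--\ref{C4:th:ENDABC} and of the step-size range (via $\eigmin(\mat D)=1$) is correct and coincides with what the paper disposes of in a single sentence (``this choice satisfies the conditions in \cref{th:ENDABC}, with $\mat D=\id$, under \cref{asm:connected} and doubly stochasticity'').

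Where you part ways with the paper is condition \ref{C5:th:ENDABC}, and there your computation is right while the paper's one-line claim is too quick. Since $\mat D=\id$ forces $\sqrt{\mat B}\mat D\sqrt{\mat B}=\mat B$, condition \ref{C5:th:ENDABC} reads
\[
\id-\tfrac12(\id-\WDbs)^2-(\WDbs)^2 \;=\; \tfrac12\id+\WDbs-\tfrac32(\WDbs)^2 \;=\; -\tfrac32\bigl(\WDbs-\id\bigr)\bigl(\WDbs+\tfrac13\id\bigr)\;\succcurlyeq\;0,
\]
which (given that the spectrum lies in $[-1,1]$) holds if and only if $\eigmin(\WDbs)\geq-\tfrac13$ --- a spectral bound that symmetry, double stochasticity and \cref{asm:connected} do not imply. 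This gap is genuine, but it sits in the corollary and in the paper's own proof, not in your argument: for two agents joined by a single undirected edge (no self-loops), the only compliant symmetric doubly stochastic matrix is $\WD{1}=\left[\begin{smallmatrix}0&1\\1&0\end{smallmatrix}\right]$, with eigenvalue $-1$; taking scalar costs $f_1(y)=\tfrac12(y-1)^2$, $f_2(y)=\tfrac12(y+1)^2$, the disagreement components $(a_k,b_k)$ of $(\hy^k,\hv^k)$ along $(1,-1)$ evolve through the matrix $\bigl[\begin{smallmatrix}-1&\gamma\\2&-(1+\gamma)\end{smallmatrix}\bigr]$, whose spectral radius $\tfrac12\bigl((2+\gamma)+\sqrt{\gamma^2+8\gamma}\bigr)$ exceeds $1$ for every $\gamma>0$, so $(\hy^k)$ and its running average diverge; since $\grad{\hy}\fbs(\hy^\star)\neq\0$ here, the merit function blows up and \eqref{eq:avg_conv_ABC} fails. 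Hence the corollary is only salvageable with the extra hypothesis you flag (e.g.\ $\eigmin(\WD{p})\geq-\tfrac13$ for all $\p$, which holds in particular for lazy/positive-semidefinite mixing matrices); with that addition your proof is complete, and without it no proof can exist.
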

\end{example}

\cref{th:ENDABC} requires a recovery procedure (i.e., \eqref{eq:avg_conv_ABC} holds for the running average only), as e.g. in \cite{Qu_Harnessing_TCNS2018}, but pointwise convergence could be shown for several special cases of \eqref{eq:ENDABC}, see e.g. \cite{Xu_AugDGM_CDC2015}. 
We  note  that  \cref{th:ENDABC} enhances customizability with respect to  \cite[Th.~24]{Scutari_Unified_TSP2021},   even in the standard scenario \eqref{eq:standard} (the sparsity-oblivious case), by allowing for non-identical blocks $\mat A_p$'s, $\mat B_p$'s, $ \mat C_p$'s -- corresponding to integrating different methods for the components of $y$.


\subsection{\gls{END} Push-sum DGD}
\label{sec:DGD}
Techniques to solve optimization problems over switching or directed graphs also find their counterpart in the \gls{END} framework. As an example, here we generalize the push-sum subgradient algorithm in \cite[Eq.~(1)]{NedicOlshevsky_Directed_TAC2015}.

Let the agents  communicate over a time-varying network $(\gk{C}{k})_{\k}$, $\gk{C}{k} =(\I,\Ek{C}{k})$. Given a fixed estimate graph $\g{E} \supseteq \g{I}$, for each $\p$ we consider a time-dependent design graph $(\gDk{k}{p})_{\k}$,  $\gDk{k}{p} = (\neigo{E}{p},\EDk{k}{p}) \subseteq \gk{C}{k}$ (note that the set of nodes is fixed in $\gDk{k}{p} $). 
For all $\i$ and  $ p \in \neig{E}{i}$, agent $i$ performs the following updates:
\begin{subequations}\label{eq:DGD_END}
	\begin{align}
		q_{i,p}^{k+1} & = \textstyle \sum _{j \in \neigo{E}{p} }  [ \WDk{k}{p} ]_{i_p,j_p} q_{j,p} ^k
		\\
		\hw_{i,p}^{k+1} & \coloneqq \textstyle \sum _{j \in \neigo{E}{p} }  [ \WDk{k}{p} ]_{i_p,j_p} \hz_{j,p} ^k
		\\
		\hg_{i,p} ^{k+1} &\in \subd{y_p}{f_i ( \hyt_{i}^{k+1}) },  \quad   \hy_{i,p}^{k+1}\coloneqq \frac {\hw_{i,p}^{k+1} } {q_{i,p}^{k+1} }
		\\
		\hz_{i,p}^{k+1} & = \hw_{i,p}^{k+1} - \gamma^{k} \hg_{i,p} ^{k+1},
	\end{align}
\end{subequations}
initialized at $\hz_{i,p}^{0} \in \R^{n_{y_p}}$, $q_{i,p}^0=1$. 
With respect to \cite[Eq.~(1)]{NedicOlshevsky_Directed_TAC2015},  agent $i$ keeps  one scalar  $q_{i,p}$  for each $p\in \neig{E}{i}$ (instead of one overall), but does not store and exchange  the variables $\hz_{i,p} \in \R^{\n{y_p}}$  for $p \notin \neig{E}{i}$.

\begin{assumption}\label{asm:columnstoch} 
   For all $\k$ and $\p$, it holds that:
\begin{itemize}
	\item[(i)] {\it Self-loops:}  for all $ i\in \neigo{E}{p}$, $(i,i) \in \EDk{k}{p}$;
	\item[(ii)] {\it Column-stochasticity}:  $\1_{N_p}^\top \WDk{k}{p} = \1_{N_p}^\top$;
	\item[(iii)] {\it Finite weights}:  $[\WDk{k}{p}]_{i_p,j_p} \geq \nu >0, \ \forall  (i,j) \in \EDk{k}{p}$.
	 \hfill $\square$
\end{itemize}
\end{assumption}
\begin{assumption}\label{asm:Qconnected}
    There exists an integer $Q>0$ such that, for all $\p$, $(\gDk{k}{p})_{\k}$ is $Q$-strongly connected.  \hfill $\square$
\end{assumption}

\begin{example}[Choosing time-varying design graphs] \sloppy ~Assume 
	$\g{C} \subseteq \bigcup_{t=kQ}^{(k+1)Q-1} \gk{C}{k}$, for all $\k$ and some strongly connected graph $\g{C}$.
	Choose some graphs $(\gD{p})_{\p}$ that satisfy \cref{asm:consistency} and such that each $\gD{p}$ is strongly connected. Then, \cref{asm:Qconnected}  holds by settihg $\gDk{k}{p}=\gD{p} \bigcap  \gk{C}{k}$, for all $\p$  and all $\k$. \hfill $\square$
\end{example}

\begin{theorem}\label{th:DGDABC}
 Let \cref{asm:columnstoch,asm:Qconnected} hold. Assume that, for all $\i$, $f_i$ is  convex and there is $L>0$ such that $\| g_i \|\leq L$, for all $y \in \R^\n{y}$ and  $g_i \in \subd{y} f_i (y) $. Let $(\gamma^k)_{\k}$ be a positive non-increasing sequence such that $\sum_{k=0}^{\infty} \gamma^k =\infty$, $\sum_{k=0}^{\infty} (\gamma^k)^2<\infty$. Then, the sequence $(\hy^k)_\k$ generated by \eqref{eq:DGD_END} converges to  $\Ebs{}(y^\star)$, for some $y^\star \in \mc{Y}^\star$. \hfill $\square$
\end{theorem}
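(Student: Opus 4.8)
The plan is to recognize \eqref{eq:DGD_END} as a block-decoupled \emph{subgradient-push} iteration and adapt the analysis of \cite{NedicOlshevsky_Directed_TAC2015}, exploiting that each mixing matrix $\WDk{k}{p}$ acts only within the cluster $\neigo{E}{p}$ while the bounded subgradients $\hg_{i,p}$ provide the coupling across components. First I would record the two elementary consequences of the column-stochasticity in \cref{asm:columnstoch}: the mass is conserved, $\sum_{i\in\neigo{E}{p}} q_{i,p}^k = N_p$, and so is the cluster-sum of the $\hz$-variables under the mixing step. Introducing the cluster average $\bar z_p^k \coloneqq \frac{1}{N_p}\sum_{i\in\neigo{E}{p}} \hz_{i,p}^k$ and the stack $\bar z^k \coloneqq \col((\bar z_p^k)_\p)$, the recursion collapses to a scaled subgradient step
\begin{equation}\label{eq:avgrec}
	\bar z^{k+1} = \bar z^k - \gamma^k \mat S\, \hg^{k+1},
\end{equation}
with the fixed preconditioner $\mat S \coloneqq \diag((\tfrac{1}{N_p}\id_{\n{y_p}})_\p)\succ 0$ and the $p$-block of $\hg^{k+1}$ equal to $\sum_{i\in\neigo{E}{p}} \hg_{i,p}^{k+1}$. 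Crucially, since $\g{I}\subseteq\g{E}$ forces $\grad{y_p} f_i \equiv \0$ for every $i\in\neigo{E}{p}\setminus\neigo{I}{p}$, evaluating the subgradients at a common point gives $\sum_{i\in\neigo{E}{p}}\grad{y_p} f_i = \grad{y_p} f$; hence \eqref{eq:avgrec} is a genuine subgradient method for $f$, and the fixed $\mat S\succ0$ does not alter the minimizers.

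The technical core is a \emph{perturbed push-sum consensus} estimate. Under \cref{asm:columnstoch,asm:Qconnected}, the backward products $\WDk{k}{p}\cdots\WDk{s}{p}$ converge geometrically to a rank-one limit $\phi_p^k\1_{N_p}^\top$, and the self-loop and finite-weight conditions guarantee a uniform lower bound $q_{i,p}^k\geq \delta>0$ (the standard argument of \cite{NedicOlshevsky_Directed_TAC2015}, applied cluster by cluster). I would use these, together with the uniform subgradient bound $\|\hg_{i,p}\|\leq L$, to control the ratio error
\begin{equation}\label{eq:conserr}
	\|\hy_{i,p}^k - \bar z_p^k\| \leq C\big(\lambda^k \|\hz_p^0\| + \textstyle\sum_{s=1}^{k}\lambda^{k-s}\gamma^{s-1} L\big)
\end{equation}
for some $C>0$ and $\lambda\in(0,1)$ depending only on $N_p,Q,\nu$. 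Because $\gamma^k\to0$ the right-hand side vanishes, and the convolution of a square-summable sequence with a geometric kernel yields $\sum_k \gamma^k\|\hy_{i,p}^k-\bar z_p^k\|<\infty$ from $\sum_k(\gamma^k)^2<\infty$.

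With consensus under control, I would close the argument with the classical diminishing-step subgradient estimate. Testing \eqref{eq:avgrec} against an arbitrary $y^\star\in\set Y^\star$ in the $\mat S\inv$-weighted norm cancels the $1/N_p$ factors and restores $\grad{y_p} f$; convexity of each $f_i$ and the subgradient inequality evaluated at the local iterates $\hyt_i^{k+1}$ then give
\begin{equation}\label{eq:descent}
\begin{aligned}
	\|\bar z^{k+1}-y^\star\|_{\mat S\inv}^2 &\leq \|\bar z^{k}-y^\star\|_{\mat S\inv}^2 - 2\gamma^k\big(f(\bar z^k)-f(y^\star)\big) \\
	&\quad + (\gamma^k)^2 L^2 C' + \gamma^k e^k,
\end{aligned}
\end{equation}
where $e^k\coloneqq c\,L\sum_{\p}\sum_{i\in\neigo{E}{p}}\|\hy_{i,p}^k-\bar z_p^k\|$ collects the consensus residuals. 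By \eqref{eq:conserr} and the step-size hypotheses, $\sum_k\gamma^k e^k<\infty$ and $\sum_k(\gamma^k)^2<\infty$; summing \eqref{eq:descent} shows that $\|\bar z^k-y^\star\|$ is convergent and $\sum_k\gamma^k(f(\bar z^k)-f(y^\star))<\infty$, whence $\liminf_k f(\bar z^k)=f(y^\star)$ since $\sum_k\gamma^k=\infty$.

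Extracting a convergent subsequence $\bar z^{k_j}\to y^{\star\star}\in\set Y^\star$ and invoking the convergence of $\|\bar z^k-y^{\star\star}\|$ upgrades this to $\bar z^k\to y^{\star\star}$, and \eqref{eq:conserr} then yields $\hy_{i,p}^k\to y_p^{\star\star}$ for all $i,p$, i.e. $\hy^k\to\Ebs{}(y^{\star\star})$. I expect the perturbed push-sum bound \eqref{eq:conserr} --- establishing the uniform mixing rate $\lambda$ and the uniform lower bound on the masses $q_{i,p}^k$ from the column-stochastic, $Q$-connected, self-looped structure --- to be the main obstacle; matching the block-wise normalization $1/N_p$ with the weighted norm $\mat S\inv$, so that \eqref{eq:descent} recovers subgradients of the true $f$ rather than a reweighting that would shift the minimizer, is the other delicate bookkeeping point.
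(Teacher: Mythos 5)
Your proposal is correct and follows essentially the same route as the paper: both reduce \eqref{eq:DGD_END} to a perturbed push-sum protocol per component $p$, invoke the consensus/summability estimates of \cite{NedicOlshevsky_Directed_TAC2015} cluster by cluster, derive the same averaged recursion for $\bar z^k$, and close with a diminishing-step subgradient descent inequality in a block-weighted norm (your $\mat S\inv = \diag((N_p \id_{\n{y_p}})_\p)$ is exactly the paper's $\mat D$), the paper citing \cite[Lem.~7]{NedicOlshevsky_Directed_TAC2015} where you spell out the quasi-Fej\'er/subsequence argument explicitly. Your explicit observation that $\g{I}\subseteq\g{E}$ makes the cluster sums of partial subgradients equal to subgradients of $f$ is a point the paper uses only implicitly, but it is the same proof.
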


\subsection{Constraint-coupled distributed optimization}\label{subsec:constraintcoupled}

Finally, we study  a different,  constraint-coupled problem:
\begin{subequations}\label{eq:CCP}
	\begin{empheq}[left={ \hspace{-1em}\empheqlbrace }]{align}
		\min_{ x_i \in\R^ {\n{x_i}}, \i} &  \  \textstyle \sum_{\i} f_i(  x_i ) \label{eq:CCP:a}
		\\
		\text{s.t. \quad   } & \  \textstyle \sum_{i \in \neigo{I}{p} } \mat A_{p ,  i}  x_i  - a_{p, i} = \0, \quad \forall \p \label{eq:CCP:b}
	\end{empheq} 
\end{subequations}
for a given interference graph $\g{I} = (\set P, \set I,\E{I})$, where $f_i$ and $ \{ \mat A_{p ,  i} \in \R^{\n{y_p} \times \n{x_i}},  a_{p,i} \in \R^{\n{y_p}} \}_{  p \in \neig{I}{i}  }$ are private data kept by agent $i$; and the constraints \eqref{eq:CCP:b} are  \emph{not} compliant with the communication graph $\g{C}$, namely $\neigo{I}{p} \not\subseteq \neig{C}{i}$ for any $i$. Differently from what we did with the cost-coupled problem in \eqref{eq:do1}, here we choose as the variable of interest the dual variable associated with the constraints in \eqref{eq:CCP:b}, \ $y = \col((y_p)_\p)\in\R^\n{y}$.
Typical distributed methods to solve \eqref{eq:CCP} require each agent to store a copy of the entire dual variable  (and possibly of  other variables in $\R^\n{y}$, e.g., an estimate of the constraint violation) \cite{Falsone_TrackingADMM_AUT2020,Li:CoupledInequality:TAC:2021}. 
\gls{END} primal-dual or dual methods can improve efficiency by exploiting the sparsity of $\g{I}$.
 For instance, (a simplified version of) the algorithm in \cite[Eq.~(31)]{Bianchi_minG_TCNS_2023} can be directly used to solve \eqref{eq:CCP}. 
 Alternatively, let us consider the dual of  \eqref{eq:CCP}:
\begin{align} \label{eq:dualCCP}
	\max_{ y \in\R^ {\n{y}} }  &  \  \textstyle \sum_{\i} \varphi_i (  (y_p)_{p\in \neig{I}{i}} ),
\end{align}
 $\varphi_i (y) \coloneqq \min_{x_i \in \R^{\n{x_i}} }  f_i(x_i) + \sum_{ p \in \neig{I}{i}} \langle y_p  , \mat  A_{p,i} x_i -a_{p,i} \rangle$; note that \eqref{eq:dualCCP} is  in the form \eqref{eq:do1}. In fact, \eqref{eq:dualCCP} was solved in  \cite{Alghunaim_SparseConstraints_TAC2020} via the reformulation \eqref{eq:dual_reformulation}; this approach has the disadvantage of requiring undirected communication. Nonetheless, \eqref{eq:dualCCP} can also be solved over directed (time-varying) networks, \eg via the iteration in \eqref{eq:DGD_END}.\footnote{If each $f_i$ is convex with compact domain, where the subgradients of the local dual function $\varphi_i$ can be computed as $\hg_{i,p}^k = \mat A_{p,i}x_i^\star(\hyt^k_i) -a_{p,i}$, with $x_i^\star (\hyt_i) \in \argmin_{x_i \in \R^{\n{x_i}} }  f_i(x_i) + \sum_{ p \in \neig{I}{i}} \langle \hy_{i,p}  ,  \mat A_{p,i} x_i -a_{p,i} \rangle  $.}


\section{Illustrative application}\label{sec:numerics}
In this section we study numerically a  regression problem with sparse measurements \cite{NecoaraClipici_Coordinate_SIAM2016,Alghunaim_Stochastic_TAC2020}, arising from distributed estimation in wireless and ad-hoc sensor networks. 
Let us consider some sensors $ \{1,2,\dots,N\}\eqqcolon \set I$ and some sources $\{1,2,\dots, P\} \eqqcolon \set P$, spatially distributed on a plane in the square $[0,1] \times [0,1]$, as illustrated in \cref{fig:distribution}. Each source $p$ emits a signal $\bar y_p \in \R$, sensed by all the sensors in a radius $r_{\textnormal{s}} >0$; in turn, each sensor $i$ measures
\begin{align}
    h_i \coloneqq   \mat H_i \; \col ((\bar y_p)_{p \in \neig{I}{i}}) + w_i,
\end{align}
 where $h_i \in \R^{\n{h_i}}$, $\mat H_i$ is a known output matrix, $w_i$ is the measurement noise.  Sensor $i$ can send information to all the peers in a radius $r_\textnormal{c}^i$ (e.g., proportional to the sensor specific power); this induces a directed communication network $\g{C} = (\set I, \E{C})$ among the sensors, which we assume to be \emph{strongly connected}.

\subsubsection{Linear regression}\label{sec:linearregression}
\begin{figure}[t] 
\centering
\includegraphics[width=0.4\columnwidth]{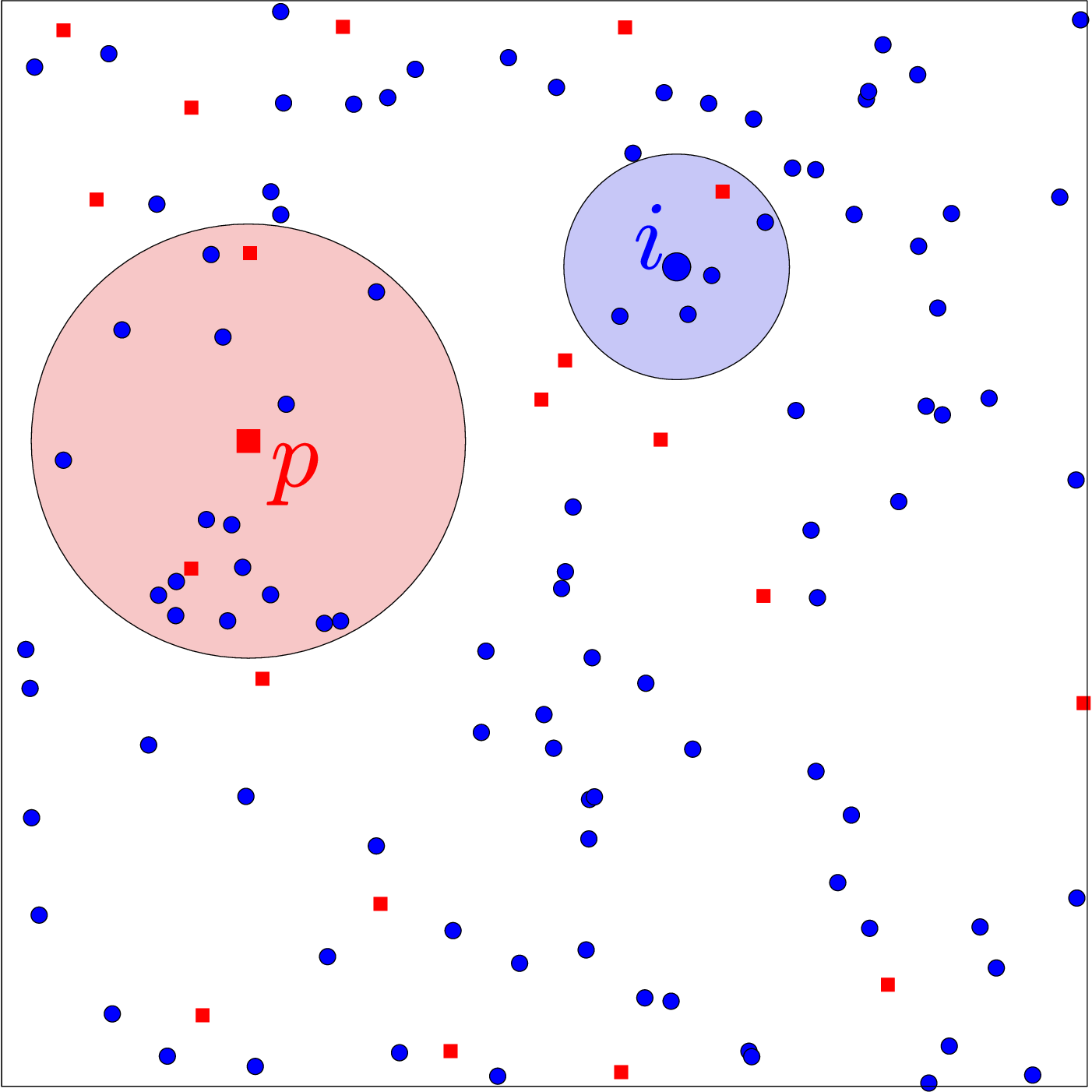}
\caption{Distribution of sources (red) and sensors   (blue). Sensors in the red circle receive signal from source $p$. Sensors in the blue circle can receive data by (but not necessarily send to) sensor $i$.} \label{fig:distribution}
\end{figure}
In our first simulation, the sensors' goal is to collaboratively solve the least square problem
\begin{align}\label{eq:regression}
     \min_{y \in \R^{P}} \ \sum_{\i} \left\| h_i - \mat H_i \col ((y_p)_{p\in \neig{I}{i}) } \right \|^2,
\end{align} 
where $\neig{I}{i}$ is the set of sources positioned less than $r_\textnormal{s}$ away from sensor $i$.
Problem \eqref{eq:regression} is  in the form  \eqref{eq:do1}. We seek a solution via algorithm \eqref{eq:DGD_END} (with fixed communication graph), comparing the performance for two choices of the design graphs:
\begin{itemize}[leftmargin =*]
    \item \emph{Standard}: $\gD{p}$'s are chosen as in \eqref{eq:standard}: with this choice, \eqref{eq:DGD_END} boils down to the standard Push-sum DGD\cite{NedicOlshevsky_Directed_TAC2015}.
    \item \emph{Customized}: each $\gD{p}$ is designed to exploit the sparsity in \eqref{eq:regression}. In particular, we  aim at optimizing the memory allocation for the estimates, by minimizing the number of nodes in $\gD{p} $, provided that $\gD{p} $ must be strongly connected (and \cref{asm:consistency} must be satisfied). Designing such a $\gD{p}$  corresponds to (approximately) solving a Strongly Connected Steiner Subgraph Problem \cite{Charikar:Directed:Steiner:1999} (where $\gD{p}$ is a subgraph of  $\g{C}$)\footnote{We use all the available edges, i.e., $\gD{p} = \g{C}|_{\neigo{E}{p}}$.}.  
\end{itemize}
\begin{figure}
\includegraphics[width=0.95\columnwidth]{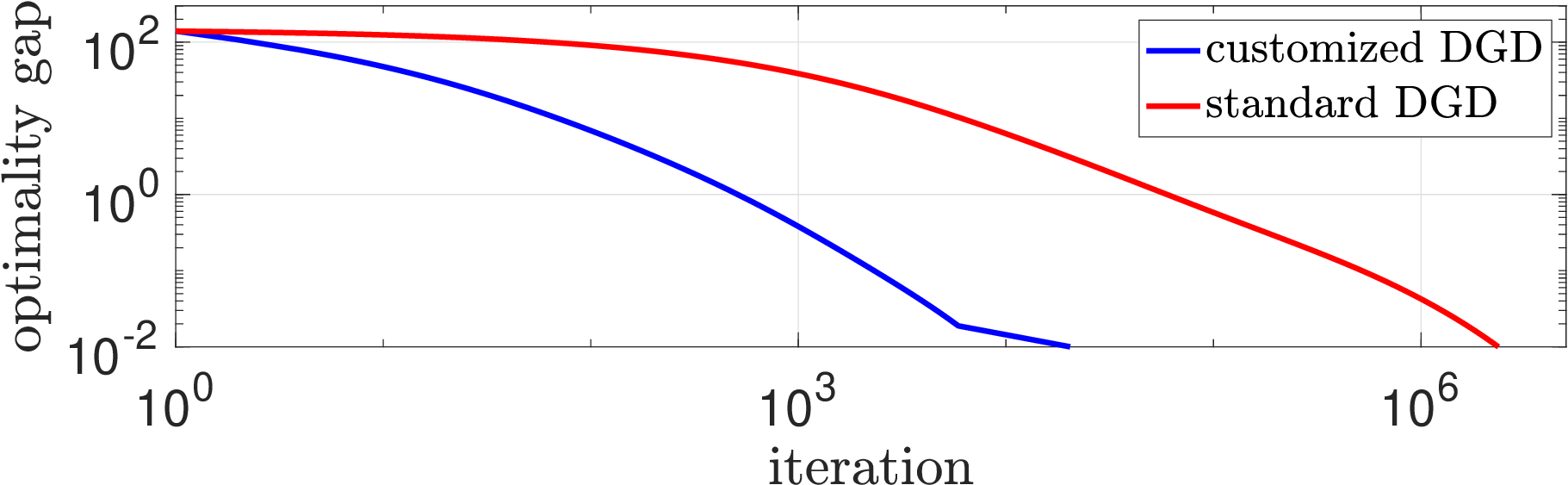}
\\[1em]
\includegraphics[width=0.95\columnwidth]{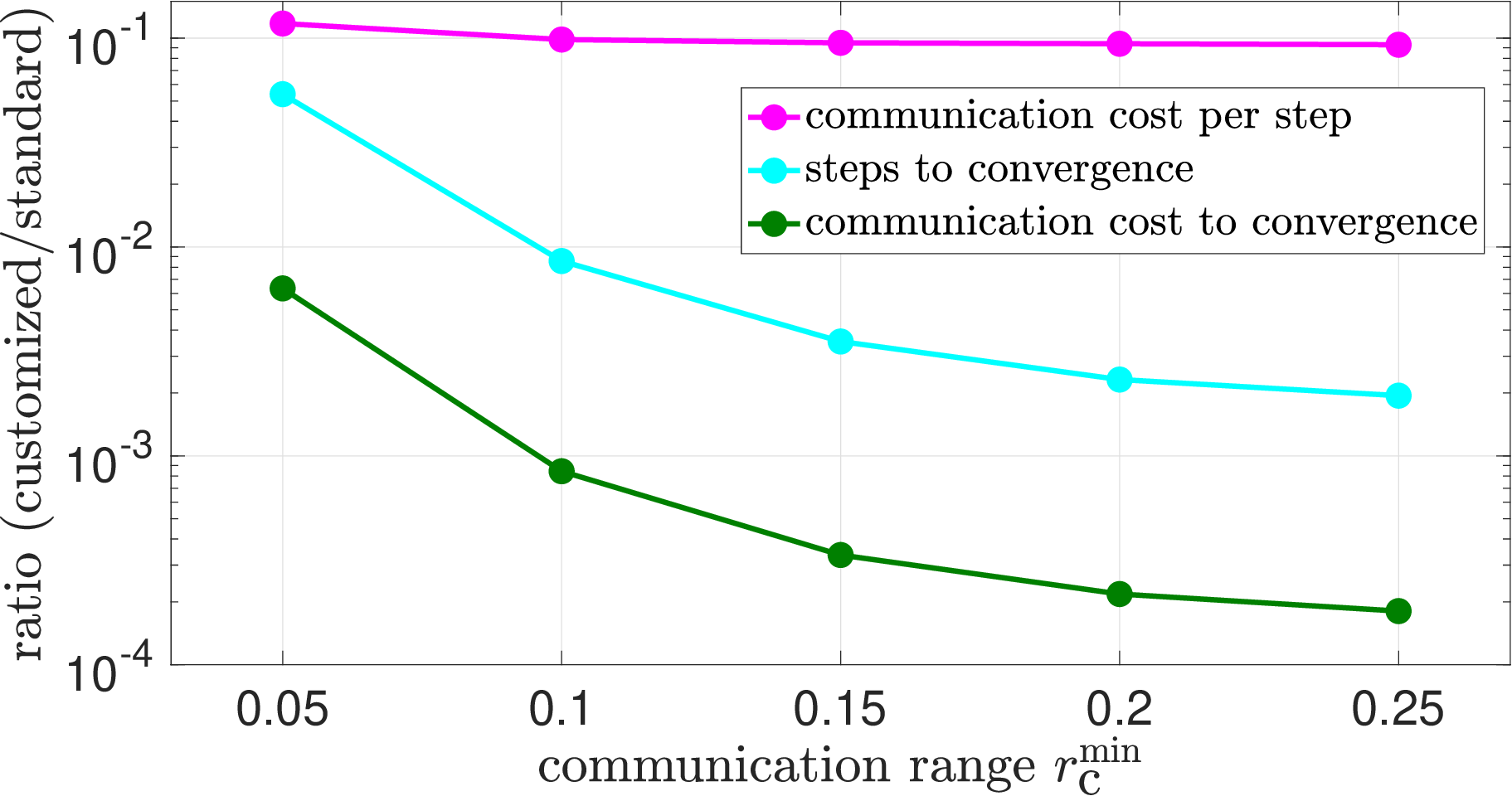}
\caption{Linear regression via algorithm \eqref{eq:DGD_END}, 
	for different values of the minimum sensor communication radius  $r_\textrm{c}^{\min}$ and stopping criterion $\mathfrak V(\hy) \leq 10^{-2}$  (bottom), and the trajectories obtained with $r_\textrm{c}^{\min} = 0.1$ (top). A larger $r_\textrm{c}^{\min}$ induces a denser graph $\g{C}$. } \label{fig:regression_simulation}
\end{figure}

We set $N = 100$, $P=20$,  and randomly  generate sensor/sources positions as in \cref{fig:distribution}.
We choose $r_{\textnormal{s}} = 0.2$, and draw each $r^{i}_{\textnormal{c}}$
uniformly in $[ r_{\textnormal{c}}^{\min},r_{\textnormal{c}}^{\min}+0.1] $.
For all $\i$,  we fix $\n{h_i} = 10$, we generate $\mat H_i$ by first uniformly drawing entries in $[0,1]$ and then normalizing the rows to unitary norm, we  draw each element of $w_i$ from an unbiased normal distribution with variance $0.1$; each signal $\bar y_p$ is uniformly randomly chosen in $[0,1]$; the step size is set as $\gamma^k = k^{-0.51}$ in \eqref{eq:DGD_END}.\footnote{Although the bounded subgradient assumption in \cref{th:DGDABC} fails, boundedness of the sequences generated by \eqref{eq:DGD_END}, and hence convergence, can be established based on coercivity of the cost function.}  The advancement is evaluated via the merit function $\mathfrak V  (\hy)\coloneqq \max \{ \hspace{-0.1em}\| \hspace{-0.1em}\diag( (\frac{1}{N_p}\mat I)_\p)\Piperp{} \hy \| \|\grad{\hy} \fbs( \hy^\star ) \|, \hspace{-0.1em}| \fbs(\Piparallel{}\hy )\hspace{-0.1em} -\hspace{-0.1em} \fbs(\hy^\star) |  \}$, where $\hy^\star = \Ebs{}(y^\star)$ and $y^\star $ solves \eqref{eq:regression}.
\cref{fig:regression_simulation} shows the results for different values of $r_{\textnormal{c}}^{\min}$. For $r_{\textnormal{c}}^{\min} = 0.1$, the customized method is  15 times faster then the standard one. 
Increasing  $r_{\textnormal{c}}^{\min}$  only marginally reduces the  per-iteration communication cost of the customized method. In fact, already for $r_{\textnormal{c}}^{\min} = 0.25$, the graph $\g{C}|_{\neigo{I}{p}}$ is strongly connected for all $\p$, so $\g{E} = \g{I} $ can be chosen (in other terms, each agent only estimates and exchanges the components of $y$ that directly affect its local cost, while it also has to estimate other components for smaller $r_{\textnormal{c}}^{\min}$). 
In this situation, the customized method achieves a reduction of the communication cost (where sending a variable to \emph{all} the neighbors on $\g{C}$ has a cost of $1$, in a broadcast fashion) of over $99.9 \%$.


\subsubsection{LASSO}\label{sec:lasso} 

Next, we assume that only $30 \%$ of the sources emits a signal at a given instant (the vector $\bar y$ is sparse). The sensors collaboratively solve the following problem, regularized to promote sparsity,
\begin{align*}
     \min_{y \in \R^{P}} \ \|y\|_1+ \sum_{\i} \left\| h_i - \mat H_i \col (y_p)_{p\in \neig{I}{i} } \right \|^2,
\end{align*}
where $\|\cdot\|_1$ is the $\ell_1$ norm. 
By defining  $f_i((y_p)_{p \in \neig{I}{i}}) = \| h_i - \mat H_i \col ((y_p)_{p \in \neig{I}{i}}) \|^2+  \sum_{p \in \neig{I}{i}} \frac{1}{|\neigo{I}{p}|}|y_p|$, we retrieve the form \eqref{eq:do1}.
We set $N = 10$, $P=20$, $r_\textnormal{c}^{\min} = 0.1$, $\n{h_i} =1$ for all $i$, generate random positions for the sensors and sources,  and choose the other parameters as above, for both the standard and customized methods. \cref{fig:LASSO} compares the results for different values of $r_\textnormal{s}$. 
For larger $r_\textnormal{s}$, the interference graph $\g{I}$ is denser, and the gap between customized and standard method decreases: in fact, for $r_\textnormal{s} =0.8$  the two algorithms coincide, as $\g{I}$ is complete. Nonetheless, when  $\g{I}$ is sparse, the customized algorithm saves up to $99\%$ of the communication cost. 

In conclusion, while requiring some initial computational  effort to choose the design graphs $\gD{p}$,
the sparsity-aware method results in substantial efficiency
improvement --  especially if the estimation problem is solved
repeatedly, e.g., each time new signals are received from the sources.

\begin{figure}[t] 
\centering
\includegraphics[width=0.95\columnwidth]{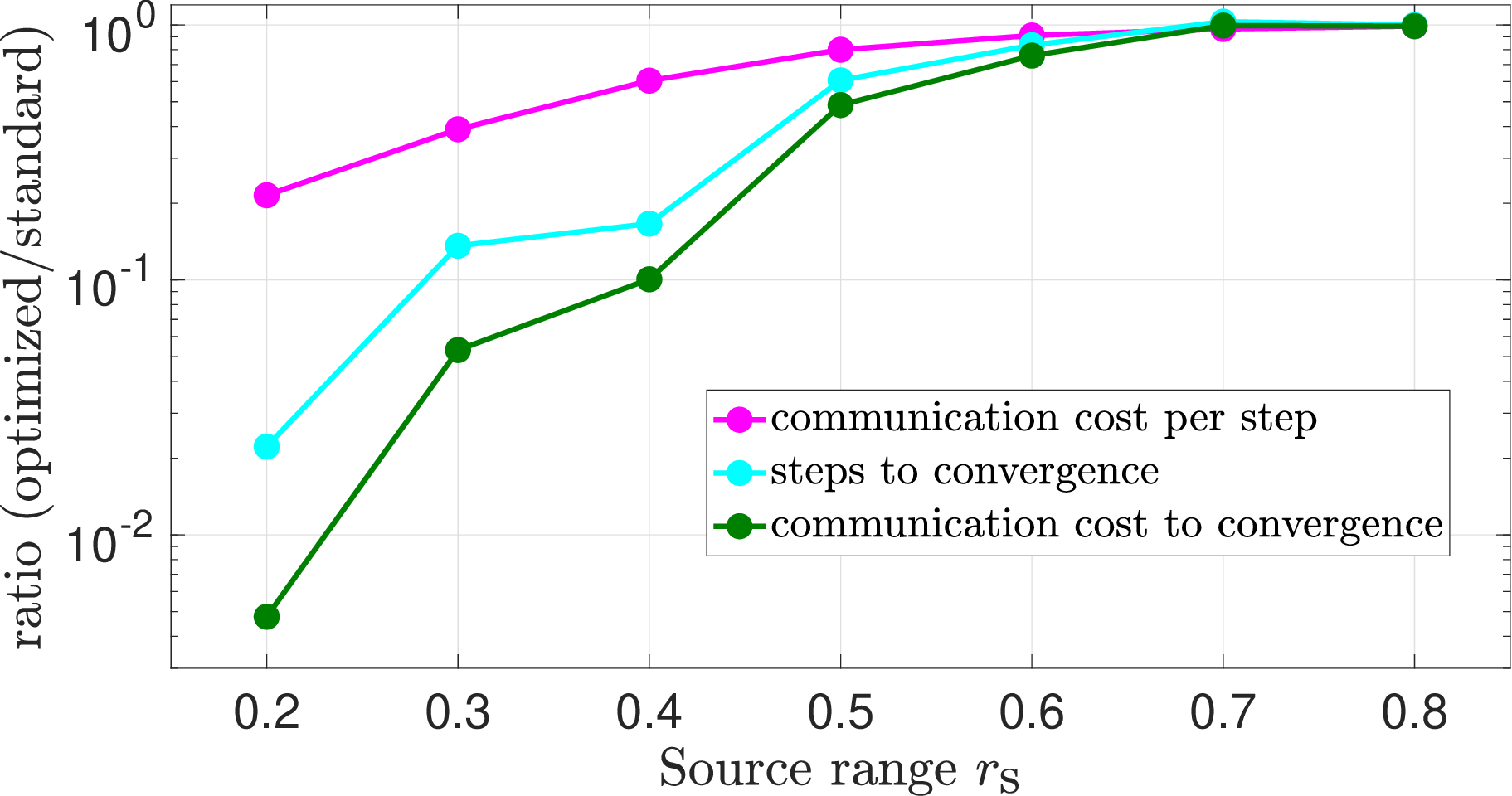}
\caption{LASSO via 
algorithm \eqref{eq:DGD_END},
	and different source ranges $r_{\textrm s}$.}
	\label{fig:LASSO}
\end{figure}


\section{Conclusion}\label{sec:extension}

We have shown that the \gls{END} framework \cite{Bianchi_minG_TCNS_2023} can be applied to a variety of distributed optimization problems, to enhance efficiency by accounting for the intrinsic sparsity in the agents coupling. Besides revisiting dual methods, we derived the \gls{END} (i.e., sparsity aware) version of the very general ABC method and of  Push-sum DGD; and we showed how to efficiently tackle constraint-coupled problems with sparse constraints, even over directed graphs. Our simulations show that \gls{END} algorithms can substantially reduce the computational and communication cost, while introducing little complication in the convergence proof with respect to their sparsity-unaware counterparts. 

As sparsity-aware \gls{END} algorithms require some initial design effort for the allocation of the estimates, their use is particularly recommended for problems with special structure \cite[App. A.4]{Bianchi_minG_TCNS_2023}, or repeated/time-varying problems like distributed estimation and MPC. 
Future work should focus on computationally efficient and
distributed methods to perform the allocation of the estimates online, thus avoiding the need for any a priori
design.

\begin{appendices}
\subsection{Proof of \cref{th:ENDABC}}\label{app:GNEproof}
We adapt the proof of \cite[Th.~24]{Scutari_Unified_TSP2021}. We note that $\hz^0=\0_{\n{\hy}} \in \range(\mat B)$; by the conditions \ref{C1:th:ENDABC} and \ref{C4:th:ENDABC}, the update in \eqref{eq:ENDABC}, and an induction argument, we have  $\hy^{k},\hz^{k} \in \range(\mat B)$, for all $k\geq 1$. Hence, we  rewrite \eqref{eq:ENDABC} as
\begin{subequations}
	\label{eq:unraveled_ABCD}
	\begin{align}
	\hy^k & = \mat B\hyu^{k}, \quad \hz^{k}=\gamma \mat B\hzu^{k}
	\\
	\hyu^{k+1} & = \mat D \hy^{k} -\gamma (\grad{\hy}\fbs(\hy^{k}) +\hzu^{k} )
	\\
	\hzu^{k+1} & = \hzu^{k}+\textstyle \frac{1}{\gamma} \mat  C\hyu^{k+1}
	\end{align}
\end{subequations}
for all $k\geq 1$. Let $\Phi(\hy,\hz) \coloneqq \fbs(\hy)+\langle \hy,\hz \rangle$; the form in \eqref{eq:unraveled_ABCD} can be exploited to prove the following lemma.
\begin{lemma}\label{lem:23Scutari}
	Let $(\hy^k,\hyu^{k},\hzu^{k})$ be a sequence generated by \eqref{eq:unraveled_ABCD}.
	Then, for all  $\hy \in \Ebs{}$, $\hz \in  \Ebsperp{}$, it holds that:
	\begin{equation*}
	\Phi(\hyavg^{k+1},\hz)  - \Phi(\hy,\hz) \leq \textstyle \frac{1}{2k} h (\hy,\hz),
	\end{equation*}
	where
	$	h (\hy,\hz)\coloneqq \frac{1}{\gamma} \| \hy^0 -\hy\|^2_{\mat D}+ \gamma \frac{ \| \mat B-\Piparallel{}  \|}{\underline{\uplambda}}\| \hz\|^2
	$
	and
	$
	\underline{\uplambda}\coloneqq\min \{  (\uplambda_2 (\mat  C_p))_{\p} \}
	$.
	\hfill $\square$
\end{lemma}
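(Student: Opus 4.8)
The plan is to read the unraveled recursion \eqref{eq:unraveled_ABCD} as a primal--dual (Arrow--Hurwicz-type) scheme for the saddle problem $\min_{\hy}\max_{\hz\in\Ebsperp{}}\Phi(\hy,\hz)$, in which the consensus requirement $\Piperp{}\hy=\0$ is dualized by the multiplier $\hz\in\Ebsperp{}$: the step $\hyu^{k+1}=\mat D\hy^k-\gamma(\grad{\hy}\fbs(\hy^k)+\hzu^k)$ is a primal gradient step in the $\mat D$-metric, while $\hzu^{k+1}=\hzu^k+\tfrac1\gamma\mat C\hyu^{k+1}$ accumulates the consensus residual through $\mat C$. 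Since $\hy\in\Ebs{}$ and $\hz\in\Ebsperp{}$ are orthogonal, $\langle\hy,\hz\rangle=0$, so the quantity to bound is $\Phi(\hyavg^{k+1},\hz)-\fbs(\hy)$, and the whole argument is a weighted-metric version of the standard saddle-point averaging (gap) estimate.

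First I would establish a one-step primal inequality. Writing the primal update as $\mat D\hy^k-\hyu^{k+1}=\gamma(\grad{\hy}\fbs(\hy^k)+\hzu^k)$ and pairing it with $\hy^k-\hy$ for an arbitrary test point $\hy\in\Ebs{}$, I would invoke the elementary three-point identity in the $\mat D$-metric to extract the telescoping differences $\tfrac{1}{2\gamma}(\|\hy^k-\hy\|_{\mat D}^2-\|\hy^{k+1}-\hy\|_{\mat D}^2)$ plus a quadratic remainder in $\hy^{k+1}-\hy^k$. Convexity of $\fbs$ gives $\fbs(\hy^k)-\fbs(\hy)\le\langle\grad{\hy}\fbs(\hy^k),\hy^k-\hy\rangle$, and $L$-smoothness bounds the induced gradient error; the step-size condition $\gamma<\eigmin(\mat D)/L$, combined with (a) $\mat A=\mat B\mat D$, $\mat B\succcurlyeq0$ and (e) $\id-\tfrac12\mat C-\sqrt{\mat B}\mat D\sqrt{\mat B}\succcurlyeq0$, is precisely what renders the remainder nonpositive and hence discardable. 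Conditions (b) together with the membership $\hy^k,\hz^k\in\range(\mat B)$ (already proved by induction before the lemma) let me move freely between the underlined variables $\hyu,\hzu$ and the true iterates $\hy,\hz$.

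Next I would handle the dual coupling. From $\mat C\hyu^{k+1}=\gamma(\hzu^{k+1}-\hzu^k)$ I rewrite the coupling term $\langle\hy^{k+1},\hz\rangle$ as a telescoping quantity in $\hzu$. Because $\Null(\mat C)=\Ebs{}$ by (c), $\mat C$ is invertible on $\Ebsperp{}$ with smallest nonzero eigenvalue $\underline{\uplambda}=\min\{\uplambda_2(\mat C_p)\}$; bounding the residual through $\mat C^{-1}$ restricted to $\Ebsperp{}$, and using the commutativity (d) $\mat B\mat C=\mat C\mat B$ to align the $\mat B$- and $\mat C$-eigendecompositions, is what produces the coefficient $\|\mat B-\Piparallel{}\|/\underline{\uplambda}$ multiplying $\|\hz\|^2$. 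Summing the combined primal--dual inequality over $t=1,\dots,k$, the telescoping distances collapse to the initial terms $\tfrac1\gamma\|\hy^0-\hy\|_{\mat D}^2$ and $\gamma\tfrac{\|\mat B-\Piparallel{}\|}{\underline{\uplambda}}\|\hz\|^2$, and Jensen's inequality (convexity of $\Phi(\cdot,\hz)$, the coupling being linear) turns $\tfrac1k\sum_t\Phi(\hy^t,\hz)$ into $\Phi(\hyavg^{k+1},\hz)$, yielding the claimed $\tfrac{1}{2k}h(\hy,\hz)$ bound.

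The step I expect to be the main obstacle is the simultaneous bookkeeping of the three metrics $\mat B,\mat D,\mat C$. Getting the cross terms created by the coupling $\langle\hy,\hz\rangle$ and by the mismatch between the $\mat D$- and $\mat B$-weightings to cancel or telescope cleanly hinges delicately on (d) and (e), and recovering the sharp constant $\underline{\uplambda}$ requires inverting $\mat C$ correctly only on $\Ebsperp{}$. Proving that the quadratic remainder is genuinely nonpositive under (e) and the step-size bound --- not merely bounded --- is the crux; the remaining telescoping and averaging are routine.
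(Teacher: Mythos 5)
Your proposal is correct and takes essentially the same route as the paper: the paper's own proof is just a deferral to \cite[Lem.~23]{Scutari_Unified_TSP2021}, whose argument is precisely the saddle-point gap estimate you sketch --- a per-iteration Lagrangian inequality telescoping in the $\mat D$-metric, the dual coupling handled by inverting $\mat C$ only on the orthogonal complement of the consensus space (which is where $\underline{\uplambda}$ and $\|\mat B-\Piparallel{}\|$ enter, via $\mat B\mat C=\mat C\mat B$), and Jensen's inequality to pass to the running average. The only content of the paper's proof beyond the citation is the observation that \eqref{eq:unraveled_ABCD} matches \cite[Eq.~(33)]{Scutari_Unified_TSP2021}, so all steps carry over after replacing $J$, $\operatorname{span}(\1_m)$ and $\uplambda_2(\mat C)$ there by $\Piparallel{}$, $\Ebs{}$ and $\underline{\uplambda}=\min_{\p}\uplambda_2(\mat C_p)$ --- exactly the substitutions your sketch implements.
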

\begin{proof}
	The proof is analogous to that of \cite[Lemma~23]{Scutari_Unified_TSP2021}, and omitted here. Note  that \cite{Scutari_Unified_TSP2021} uses a matrix notation (i.e., $\hy \in \R^{I \times n}$), while we need a stacked notation (as the vectors $(\hy_i)_{\i}$ are not homogeneous in size). Nonetheless, \eqref{eq:unraveled_ABCD} matches \cite[Eq.~(33)]{Scutari_Unified_TSP2021}, which allows us to repeat all the steps in  \cite[Lem.~23]{Scutari_Unified_TSP2021} (with the only precaution of replacing
	$J$, $\textrm{span} (\1_m)$, $\uplambda_2 (\mat C)$ in \cite{Scutari_Unified_TSP2021} 	with $\Piparallel{}$, $\Ebs{} $, $\underline{\uplambda}$).
\end{proof}
For all $\hz \in \Ebsperp{}$ (so that $\langle \hz, \hy^\star \rangle = 0  $), setting $\hy=\hy^\star$ in \cref{lem:23Scutari}, together with the definition of $\Phi$, yields
$\fbs(\hyavg^k) - \fbs(\hystar) + \langle \hyavg^k, \hz \rangle  \leq  \frac{1}{2k}  h ( \hystar, \hz )$.
Further choosing $\hz = 2 \frac{\Piperp{} \hyavg^k }{\| \Piperp{} \hyavg^k \| } \| \hzstar \|$, with $\hzstar\coloneqq-\grad{\hy} \fbs(\hystar)$, leads to
\begin{equation}\label{eq:Scutari36plus}
\fbs(\hyavg^k) - \fbs(\hystar) + 2 \| \hzstar \| \left\| \Piperp{} \hyavg^{k} \right \|    \leq  \textstyle \frac{1}{2k}  h ( \hystar, 2 \hzstar ).
\end{equation}
By convexity and since $\hz^\star \in \Ebsperp{}$ (by  optimality conditions), it holds that $f(\hyavg^k)-f(\hystar) \geq -\langle \hyavg^k-\hystar,\hzstar \rangle = -\langle \Piperp{} \hyavg^k,\hzstar \rangle \geq - \| \Piperp{} \hyavg^k \| \| \hzstar \|$; the latter inequality and  \eqref{eq:Scutari36plus} imply $\mathfrak{M} (\hyavg^k) \leq \frac{1}{2k} h ( \hystar, 2 \hzstar )$. \hfill$\blacksquare$
\subsection{Proof of \cref{th:DGDABC}}
Note that, for each $\p$, \eqref{eq:DGD_END} is the standard perturbed push-sum protocol \cite[Eq.~(4)]{NedicOlshevsky_Directed_TAC2015}, with perturbation term $-\gamma^k \hg^{k+1}_{i,p}$. Therefore, since $\hg^{k+1}_{i,p}$ is uniformly bounded by assumption and by the choice of  $(\gamma^k)_{\k}$, we can apply  \cite[Lem.~1]{NedicOlshevsky_Directed_TAC2015} to infer that, for all $\i$, $p\in \neig{E}{i}$
\begin{align}
	\lim_{k\to\infty}  \| \hy_{i,p}^k- \hzbar^k_p  \| =0,  \label{eq:limconsensus}
	\\
	\textstyle \sum_{k=0}^{\infty} \gamma^k \| \hy_{i,p}^{k}-  \hzbar^k_p \| = 0,  \label{eq:sumconsensus}
\end{align}
where $\hzbar_p^k \coloneqq\frac{1}{N_p} \sum_{i \in \neigo{E}{p}} \hz_{i,p}^k \in \R^{\n{y_p}}$, for all $\k$.
Let us also define $\hzbar^k\coloneqq \col( ( \hzbar^k_p )_{ \p } ) \in \R^{\n{y}}$. By \eqref{eq:DGD_END} and  \cref{asm:columnstoch}(ii), it follows that
\begin{align}\label{eq:averaged_process}
	\hzbar^{k+1}_p = \hzbar^{k}_p - \textstyle \gamma^k \frac{1}{N_p}   \sum_{i \in \neigo{E}{p}}  \hg_{i,p} ^{k+1}.
\end{align}
We next show that $	\lim_{k\to\infty}   \hzbar^k = y^\star \in \mc{Y}^\star$; then, the theorem follows by \eqref{eq:limconsensus}.
The main complication with respect to the proof of \cite[Th.~1]{NedicOlshevsky_Directed_TAC2015} is that we need a modification of  \cite[Lem.~8]{NedicOlshevsky_Directed_TAC2015} to cope with the non-homogeneity of the estimates.
\begin{lemma}\label{lem:ineq_DGD}
	For all $y^\star \in \mc{Y}^\star$, for all $\k$, it holds that
	\begin{align*}
		\begin{aligned}[b]
			\| \hzbar^{k+1} -\y^\star \|^2_{\mat{D}} & \leq   \| \hzbar^{k} -\y^\star \|^2_{\mat D} -2\gamma^k({f}( \hzbar^{k}) -{f}(y^\star) )
			\\
			& \hphantom{ {}={}} +  4L \gamma^k \sum_{\i} \sum_ {p\in \neigo{E}{i}}   \| \hzbar^{k}_p - \hyt_{i,p}^{k+1} \|
			\\
			& \hphantom{ {}={}} + (\gamma^k)^2 N L^2,
		\end{aligned}
	\end{align*}
	where $\mat D\coloneqq \diag (( N_p \id_{n_p})_{\p} )$. 	\hfill $\square$
\end{lemma}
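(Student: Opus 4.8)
The plan is to reproduce the classical subgradient descent estimate \cite[Lem.~8]{NedicOlshevsky_Directed_TAC2015}, but in the $\mat D$-weighted geometry, which is precisely what absorbs the per-component averaging $\hzbar_p^k=\frac{1}{N_p}\sum_{i\in\neigo{E}{p}}\hz_{i,p}^k$. Since $\|v\|_{\mat D}^2=\sum_{\p}N_p\|v_p\|^2$, I would expand the left-hand side blockwise: writing $s_p^k\coloneqq\sum_{i\in\neigo{E}{p}}\hg_{i,p}^{k+1}$ and substituting the averaged recursion \eqref{eq:averaged_process} gives, for each $\p$,
\begin{align*}
N_p\|\hzbar_p^{k+1}-y_p^\star\|^2=N_p\|\hzbar_p^{k}-y_p^\star\|^2-2\gamma^k\langle\hzbar_p^{k}-y_p^\star,s_p^k\rangle+\tfrac{(\gamma^k)^2}{N_p}\|s_p^k\|^2.
\end{align*}
Summing over $\p$ leaves a cross term and a quadratic term to control.

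For the quadratic term, Jensen's inequality applied to the $N_p$ summands of $s_p^k$ gives $\frac{1}{N_p}\|s_p^k\|^2\le\sum_{i\in\neigo{E}{p}}\|\hg_{i,p}^{k+1}\|^2$. Re-indexing the double sum $\sum_{\p}\sum_{i\in\neigo{E}{p}}$ as the agent-wise sum $\sum_{\i}\sum_{p\in\neig{E}{i}}$ and collecting the blocks into the full subgradient $\hg_i^{k+1}=\col((\hg_{i,p}^{k+1})_{p\in\neig{E}{i}})\in\partial f_i(\hyt_i^{k+1})$, the bounded-subgradient assumption $\|\hg_i^{k+1}\|\le L$ bounds the whole quadratic contribution by $(\gamma^k)^2\sum_{\i}\|\hg_i^{k+1}\|^2\le(\gamma^k)^2NL^2$.

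For the cross term I would re-index in the same way and split each inner product as $\langle\hzbar_p^k-y_p^\star,\hg_{i,p}^{k+1}\rangle=\langle\hzbar_p^k-\hyt_{i,p}^{k+1},\hg_{i,p}^{k+1}\rangle+\langle\hyt_{i,p}^{k+1}-y_p^\star,\hg_{i,p}^{k+1}\rangle$. Cauchy--Schwarz with $\|\hg_{i,p}^{k+1}\|\le L$ bounds the first piece below by $-L\|\hzbar_p^k-\hyt_{i,p}^{k+1}\|$. Summing the second piece over $p\in\neig{E}{i}$ and using convexity of $f_i$ at $\hyt_i^{k+1}$ against the point $(y_p^\star)_{p\in\neig{E}{i}}$ (recall the paper's overloading, whereby $f_i$ ignores the components outside $\neig{I}{i}$) yields the lower bound $f_i(\hyt_i^{k+1})-f_i((y_p^\star)_{p\in\neig{I}{i}})$; summing over $\i$ recovers $\sum_{\i}f_i(\hyt_i^{k+1})-f(y^\star)$. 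Finally, $L$-Lipschitz continuity of each $f_i$ (implied by bounded subgradients) lower-bounds $\sum_{\i}f_i(\hyt_i^{k+1})$ by $f(\hzbar^k)-L\sum_{\i}\sum_{p\in\neig{E}{i}}\|\hyt_{i,p}^{k+1}-\hzbar_p^k\|$. Collecting these bounds the cross term below by $f(\hzbar^k)-f(y^\star)-2L\sum_{\i}\sum_{p\in\neig{E}{i}}\|\hzbar_p^k-\hyt_{i,p}^{k+1}\|$; multiplying by $-2\gamma^k$ produces exactly the stated descent and consensus-error terms.

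The only real obstacle is bookkeeping: cleanly converting between the component-indexed sums $\sum_{\p}\sum_{i\in\neigo{E}{p}}$ that arise from the averaged dynamics and the agent-indexed sums $\sum_{\i}\sum_{p\in\neig{E}{i}}$ needed to apply convexity of the $f_i$'s blockwise. The weighting $\mat D=\diag((N_p\id)_{\p})$ is essential here: the factor $N_p$ cancels the $1/N_p$ of the averaging step so that the cross term reassembles into a single per-agent subgradient inequality, whereas with the plain Euclidean norm the estimate would not close. The factor $4L$ (rather than $2L$) in the consensus-error term is simply the sum of two contributions of size $L$: the displacement between the averaged iterate $\hzbar_p^k$ and the evaluation point $\hyt_{i,p}^{k+1}$, and the Lipschitz replacement of $\sum_{\i}f_i(\hyt_i^{k+1})$ by $f(\hzbar^k)$. (I note that the index set $\neigo{E}{i}$ in the statement should read $\neig{E}{i}$, the set of components estimated by agent $i$.)
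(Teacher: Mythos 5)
Your proof is correct and takes essentially the same route as the paper's: the same $\mat{D}$-weighted blockwise expansion of the averaged recursion \eqref{eq:averaged_process}, the same re-indexing of $\sum_{\p}\sum_{i\in\neigo{E}{p}}$ into agent-wise sums, and the same splitting of the cross term handled by Cauchy--Schwarz with bounded subgradients, convexity, and $L$-Lipschitz continuity, with the factor $4L$ arising identically. You also spell out the Jensen-plus-reindexing bound for the quadratic term (which the paper merely asserts), and by performing the Lipschitz replacement at $\hzbar^{k}$ your derivation is consistent with the stated inequality, whereas the paper's step (b) writes $f_i(\hzbar^{k+1})$ -- evidently a typo; your remark that $\neigo{E}{i}$ should read $\neig{E}{i}$ is likewise correct.
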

\begin{proof} By \eqref{eq:averaged_process}, we have
	\begin{align}\label{eq:usefulstep}  
		\begin{aligned}[b]
			\| \hzbar^{k+1} -\y^\star \|^2_{\mat D} & =  \| \hzbar^{k} -\y^\star \|^2_{\mat D}
			\\
			& \hphantom{ {}={} }
			- 2 \gamma^{k} \sum_{\p}  \left\langle \hzbar^{k}_p -\y_p^\star , \textstyle  \sum_{i \in \neigo{E}{p}}  \hg_{i,p} ^{k+1} \right\rangle
			\\
			& \hphantom{ {}={} }
			+ (\gamma^k)^2  \sum_{\p}    \textstyle   \frac{1}{N_p}  \left\| \sum_{i \in \neigo{E}{p}}  \hg_{i,p} ^{k+1}  \right\|^2\!.
		\end{aligned}\hspace{-1em}
	\end{align}
	The third addend on the right-hand side of \eqref{eq:usefulstep} is bounded above by $(\gamma^k)^2 NL^2$. For the second addend, we have
	\begin{align*}
		\begin{aligned}
			& \hphantom{{}={}}	\sum_{\p}  \left\langle \hzbar^{k}_p -\y_p^\star , \textstyle  \sum_{i \in 	\neigo{E}{p}}  \hg_{i,p} ^{k+1} \right\rangle
			\\
			& = \sum_{\i} \sum_{p \in \neigo{E}{i}}  \left \langle (\hzbar^{k}_p- \hy_{i,p}^{k+1})+(\hy_{i,p}^{k+1}- \y^\star_p),    \hg_{i,p}^{k+1} \right\rangle
			\\
			& \overset{(a)}{ \geq}   \sum_{\i}   -L \textstyle  \| \col( (\hzbar^{k}_p)_{p\in \neigo{E}{i}})- \hyt_{i}^{k+1} \|   + f_i(\hyt_{i}^{k+1})- f_i(y^\star)
			\\
			&  \overset{(b)}{ \geq}   \sum_{\i}   -2L \textstyle  \| \col( (\hzbar^{k}_p)_{p\in \neigo{E}{i}})- \hyt_{i}^{k+1} \|   + f_i(\hzbar^{k+1})- f_i(y^\star),
		\end{aligned}
	\end{align*}
	where in (a) we used that $ \hg_{i}^{k+1}\in \subd{\, \hyt_{i}} f_i(\hyt_{i}^{k+1})$ and convexity of $f_i$, and (b) follows by adding and subtracting (inside the sum) $f_i ( (\hzbar^{k+1}_p )_{p\in\neig{E}{i}}) = f_i(\hzbar^{k+1}) $ and by $L$-Lipschitz continuity of $f_i$. The result follows by substituting the bound back into \eqref{eq:usefulstep}.
\end{proof}
We finally note that, due to \eqref{eq:sumconsensus} and  the choice of $(\gamma_k)_\k$,   the inequality in \cref{lem:ineq_DGD} satisfies all the conditions of \cite[Lem.~7]{NedicOlshevsky_Directed_TAC2015}, in the norm $\|\cdot \|_{\mat D}$;  hence we can conclude that
$\hzbar^k \rightarrow y^\star$, for some $\y^\star \in \mc{Y}^\star$.  \hfill $\blacksquare$

\bibliographystyle{IEEEtran}
\bibliography{library}

\end{appendices}

\end{document}